\newtheorem{theorem}{Theorem}[section]
\newtheorem{proposition}{Proposition}[section]
\newtheorem{remark}{Remark}[section]
\newtheorem{lemma}{Lemma}[section]
\numberwithin{equation}{section}
\def\d{\textrm{\,d}}
\newcommand{\p}{{\partial}}
\newcommand{\eps}{\varepsilon}
\newcommand{\grad}{\nabla_{\!x}}
\def\no{\nonumber}
\def\<{\langle}
\def\>{\rangle}
\def\Int{\mathrm{int}}
\def\b{\mathrm{b}}
\newcommand{\A}{{\mathcal{A}}}
\newcommand{\F}{{\mathcal{F}}}
\newcommand{\G}{{\mathcal{G}}}
\newcommand{\abs}[1]{\left|#1\right|}
\newcommand{\abss}[1]{\left\|#1\right\|}
\newcommand{\skp}[2]{\left\langle #1,\, #2 \right\rangle}
\newcommand{\skpt}[2]{\langle #1,\, #2 \rangle}
\newcommand{\alert}[1]{{\color{red}\bf\large #1}} 
\begin{document}
\title[Boundary Layer]{Incompressible Limit of isentropic Navier-Stokes equations with Navier-slip boundary}
\author[L.-J. Xiong]{Linjie Xiong}
\address[Linjie Xiong]{\newline Institute of Mathematics, Hunan University, Changsha, 410082, Hunan, P. R. China}
\email{xlj@hnu.edu.cn}

\thanks{The research was supported by NSFC (Grant No. 11501187) and Fundamental Research Funds for the Central Universities. 
}

\maketitle

\begin{abstract}
This paper concerns the low Mach number limit of weak solutions to the compressible Navier-Stokes equations for isentropic fluids in a bounded domain with a Navier-slip boundary condition. In \cite{DGLM99}, it has been proved that if the velocity is imposed the homogeneous Dirichlet boundary condition, as the Mach number goes to 0, the velocity of the compressible flow converges strongly in $L^2$ under the geometrical assumption (H) on the domain. We justify the same strong convergence when the slip length in the Navier condition is the reciprocal of the square root of the Mach number.
\end{abstract}


\section{Introduction}
This paper is devoted to the incompressible limit for weak solutions to the compressible isentropic Navier-Stokes equations. This has been an active research field since the work of Lions-Masmoudi \cite{LM} in which various cases were considered, namely the cases of periodic flow, a flow in the whole space, or in a bounded domain with a Dirichlet boundary conditions with ``well-prepared" initial data (the density is close to constant and the velocity is close to incompressible), or with other boundary condition such as Navier condition with the constant slip length. The key idea of \cite{LM} is to employ the so-called ``group" method developed by Schochet \cite{Schochet94} and Grenier \cite{Grenier97} to show that the fast acoustic waves does not affect the incompressible limit, thus justified the weak convergence.

For the whole space case, Desjardins and Grenier \cite{DG4} investigated the dispersive properties of the acoustic waves and used the Strichartz estimate to show that the acoustic waves vanish as the Mach number goes to zero, thus justified the strong convergence.

In \cite{DGLM99}, Desjardins, Grenier, Lions and Masmoudi found a striking new phenomena. In the case of a viscous flow in a bounded domain with the usual Dirichlet boundary condition, under a generic assumption, they showed that the acoustic waves are instantaneously (asymptotically) damped, due to the formation of a thin boundary layer. This layer dissipated the energy carried by the waves. As a consequence, they obtained the strong $L^2$ convergence.

In this paper, we study the low Mach number limit of the Navier-Stokes equations for the isentropic compressible fluids. If $\eps$ denotes the Mach number which is the ratio of the macroscopic velocity and the sound speed, the compressible Navier-Stokes equations can be written as the following non-dimensional form in which we emphasize the dependence on $\eps$ of the density $\rho^\eps(x,t)$ and the velocity $\mathbf{u}^\eps(x,t)$. Here $x\in\Omega$, $t\in (0,\infty)$, and $\Omega$ is a bounded $C^2$ domain  of $\mathbb{R}^d$ with $d=2$ or $3$.
	\begin{align}\label{eq:CNS} 
	  \begin{cases}
	  	\partial_t \rho^\eps + {\rm div} (\rho^\eps \mathbf{u}^\eps) =0, \quad \text{in}\quad\! \Omega \times (0,\infty), \\[2pt]
	  	\partial_t (\rho^\eps \mathbf{u}^\eps) + {\rm div} (\rho^\eps \mathbf{u}^\eps \otimes \mathbf{u}^\eps) + \frac{\nabla_x (\rho^\eps)^\gamma}{\gamma \eps^2} = {\rm div} \Sigma (\mathbf{u}^\eps), \quad \text{in}\quad\! \Omega \times (0,\infty)\,,
	  \end{cases}
	\end{align}
with the initial data:
\begin{equation}\label{initial}
 \rho^{\eps}|_{t=0}=\rho_0^{\eps}\geq0,\quad  \rho^{\eps}\mathbf{u}^{\eps}|_{t=0}=m_0^{\eps} \quad  \text{in}~\Omega\,.
\end{equation}

 Here $\Sigma (\mathbf{u}) = \nu \sigma(\mathbf{u}) + \xi {\rm div} \mathbf{u} \mathrm{I}$ is the viscous stress tensor, where $\sigma(\mathbf{u}) = \nabla_x \mathbf{u} + \nabla_x^\top \mathbf{u}$ is the deformation tensor, and $\nu, \xi$ are the Lam\'{e} viscosity coefficients with $\nu\geq 0, \xi + \frac{2}{d}\nu \geq 0$. In this paper, we consider the system \eqref{eq:CNS} equipped with the Navier-slip boundary conditions:
	\begin{align}\label{eq:Robin BC}
	  \begin{cases}
	  \mathbf{u}^\eps \cdot \mathrm{n} =0, &\text{ on }\partial \Omega \times (0, \infty),
	  \\[2pt]
	  	\nu [\sigma (\mathbf{u}^\eps) \cdot \mathrm{n}]^\tau + \chi \sqrt\eps^{-1} (\mathbf{u}^\eps)^\tau =0, &\text{ on } \partial \Omega \times (0 ,\infty),
	  \end{cases}
	\end{align}
where $\mathrm{n}(x)$ is the outer normal vector at $x\in \partial\Omega$, and $\mathbf{u}^\tau(x)$ means the tangential part of the vector $\mathbf{u}$ at $x\in\partial\Omega$, and furthermore, $\chi > 0$ is a constant.

In this paper, we work in the context of weak solutions of \eqref{eq:CNS} for which a complete existence theory have been developed. If the initial data satisfies the following bound:
\begin{equation}\label{initialbound}
E_0^{\eps}=\int_{\Omega}\left[\pi_0^{\eps}+\frac{|m_0^{\eps}|^2}{2\rho_0^{\eps}}\right]\,\mathrm{d}x\leq C,
\end{equation}
where $C$ denotes various positive constants independent of $\eps$ and $\pi_0^\eps = \frac{(\rho_0^\eps)^\gamma-1-\gamma(\rho_0^\eps-1)}{\eps^2 \gamma (\gamma-1)}$.
Then the existence of global weak solutions to the above system (for a fixed $\eps>0$) was obtained by P.-L. Lions in \cite{L} under the condition:
$\gamma\geq\gamma_0$, where $\gamma_0=\frac{3}{2}$ if $d=2$ and $\gamma_0=\frac{9}{5}$ if $d=3$. Later, the adiabatic constant $\gamma$ is subjected to the constraint $\gamma>\frac{3}{2}$ by Feireisl, Novotny and Petzeltova in \cite{FNP} in three space dimensions.

The definition of weak solutions of \eqref{eq:CNS} is the following: $\rho_{\eps}\in L^{\infty}(0,\infty;L^{\gamma}(\Omega))$, $\mathbf{u}^{\eps}\in L^2(0,\infty;H^1(\Omega))$ solve \eqref{eq:CNS},\eqref{initial},\eqref{eq:Robin BC} in the sense of distributions. In addition, $\rho^{\eps}\in C([0,\infty);$ $L^p(\Omega))$ if $1\leq p<\gamma$, $\rho^{\eps}|\mathbf{u}^{\eps}|^2\in L^{\infty}(0,\infty;L^1(\Omega))$, $\rho^{\eps}\mathbf{u}^{\eps}\in C([0,\infty);w\mbox{-}L^{\frac{2\gamma}{\gamma+1}}(\Omega))$, and the following energy inequality holds for almost all $t>0$,
\begin{equation*}
  \int_{\Omega}\left\{\frac{1}{2}\rho^{\eps}|\mathbf{u}^{\eps}|^2+\pi^{\eps}\right\}(t)\,\mathrm{d}x+\int_0^t\int_{\Omega}\left\{\nu|\nabla_x \mathbf{u}^{\eps}|^2+(\xi+\nu)|\mathbf{u}^{\eps}|^2\right\}\,\mathrm{d}x\mathrm{d}s\leq E_0^{\eps}\,.
\end{equation*}
As analyzed in \cite{L1,LM}, if we assume $(\rho_0^{\eps})^{-1/2}\mathbf{m}_0^{\eps}$ converges weakly in $L^2(\Omega)^d$ to some $\mathbf{u}_0$ (since it is bounded in $L^2$ uniformly in $\eps$ from the point view of the initial bound \eqref{initialbound}), then in view of above energy estimates, we may assume that $\mathbf{u}^{\eps}$ convergences weakly in $L^2(0,\infty;H^1(\Omega))^d$ to some $\bar{\mathbf{u}}$, where $\bar{\mathbf{u}}$ solves the incompressible Navier-Stokes equations.
\begin{equation}\label{In-CNS}
 \begin{cases}
 \partial_t\bar{\mathbf{u}}+{\rm div}(\bar{\mathbf{u}}\otimes\bar{\mathbf{u}})-\nu\Delta\bar{\mathbf{u}}+\nabla\pi=0,\\
 {\rm div}\bar{\mathbf{u}}=0\quad \text{in}\quad \Omega\times(0,\infty),\quad \text{and}\quad \bar{\mathbf{u}}=0 \quad \text{on}\quad \partial\Omega\times(0,\infty),\\
 \bar{\mathbf{u}}|_{t=0}=\mathbf{u}_0,
 \end{cases}
 \end{equation}
in the sense of distributions for some $\pi$. Here we notice that in the second boundary condition of \eqref{eq:Robin BC}, as $\eps\rightarrow 0$, $(\mathbf{u}^\eps)^\tau \rightarrow 0$, thus in the limit $\bar{\mathbf{u}}=0$ on $\partial\Omega$.

By setting $\Psi^\eps = (\rho^\eps -1)/\eps$, $\pi^\eps = \frac{(\rho^\eps)^\gamma -1-\gamma(\rho^\eps-1)}{\eps^2 \gamma (\gamma-1)}$ and $\mathbf{m}^\eps=\rho^\eps\mathbf{u}^\eps$, the equations \eqref{eq:CNS} can be recast as
	\begin{align} \label{eq:CNS-v}
	  \begin{cases}
	 \partial_t\Psi^\eps+\left[ \frac{{\rm div}\mathbf{m}^\eps}{\eps}\right]=0,\\
	  \partial_t \mathbf{m}^\eps + \left[ \frac{\nabla_x \Psi^\eps}{\eps}\right]
	  	= \left[ {\rm div} \Sigma (\mathbf{u}^\eps) \right]
	  	  -{\rm div} (\mathbf{m}^\eps \otimes \mathbf{u}^\eps)
	  	  -(\gamma-1)\nabla_x \pi^\eps .
	  \end{cases}
	\end{align}
We mention that the bracket term in the left and right hand side of the equals in \eqref{eq:CNS-v} are the acoustic and diffusion terms respectively. Motivated by the formulation \eqref{eq:CNS-v}, as in \cite{DGLM99}, we denote $\phi=(\Psi,\, \mathbf{m})^\top$, and define the viscous wave operator $\mathcal{A}_\eps = \mathcal{A} + \eps \mathcal{D}$ where $\mathcal{A}$ and $\mathcal{D}$ are acoustic and diffusion operators respectively, defined on $\mathcal{D}'(\Omega)\times \mathcal{D}'(\Omega)^d$ by
\begin{equation}\nonumber
  \mathcal{A}\phi
= \begin{pmatrix}
    \mathrm{div}\mathbf{m} \\[2pt]
    \grad \Psi
  \end{pmatrix}\,,
\qquad
  \mathcal{D} \phi
= \begin{pmatrix} 0\\ {\rm div}\Sigma(\mathbf{m}) \end{pmatrix}
= \begin{pmatrix} 0\\
     \nu \Delta_x \mathbf{m} + (\xi+\nu)\grad \mathrm{div}\mathbf{m}
  \end{pmatrix}\,,
\end{equation}
and we assume that $\gamma>0$, $\nu > 0$ and $\xi + \nu > 0$.

Let $\{\lambda^2_{k,0}\}_{k\geq 1}$ be the nondecreasing sequence of eigenvalues and $\{\Psi_{k,0}\}_{k \geq 1}$ the orthonormal basis of $L^2(\Omega)$ functions with zero mean value of eigenvectors of the Laplace operator $-\Delta_x$ with homogeneous Neumann boundary conditions:
	\begin{equation}\label{Laplace}
		-\Delta_x \Psi_{k,0} = \lambda^2_{k,0} \Psi_{k,0} \quad \text{in } \Omega\,,
	\quad \frac{\partial \Psi_{k,0}}{\partial \mathrm{n}} = 0 \quad \text{on } {\partial\Omega}\,,
	\quad	\text{and \ } \int_\Omega \Psi_{k,0} = 0\,.
	\end{equation}
A solution of \eqref{Laplace} is said to be trivial if $\lambda_{k,0}=0$ and $\Psi_{k,0}$ is a constant. We will say $\Omega$ satisfies {\em assumption (H)} if all the solution of \eqref{Laplace} are trivial.
The eigenvalues and eigenvectors of $\A$ read as follows
	\begin{equation}\label{phi-int-0}
		\phi^{\pm}_{k,0} = \frac{1}{\sqrt{2}} \left( \Psi_{k,0}\,, \mathbf{m}^\pm_{k,0} = \pm \frac{\grad \Psi_{k,0}}{i \lambda_{k,0}} \right)^\top \in \mathbb{C}^{1+d}\,,
	\end{equation}
	\begin{equation}\label{acoustic-mode}
		\A \phi^{\pm}_{k,0} = \pm i \lambda_{k,0} \phi^{\pm}_{k,0} \quad \text{in } \Omega\,, \quad
		\mathbf{m}^\pm_{k,0} \cdot \mathbf{n} = 0 \quad \text{on } {\partial\Omega}\,.
	\end{equation}
In the paper \cite{DGLM99}, for the isentropic case, Desjardins, Grenier,  Lions
and  Masmoudi  constructed the viscous boundary layers under two conditions: the first, a geometric condition on the domain $\Omega$ (the ``assumption $(\mathrm{H})$" in \cite{DGLM99}); the second, an orthogonality condition for multiple eigenvalues, namely that if $\lambda_{k,0}=\lambda_{l,0}$ and $k\neq l$, then
\begin{equation}\label{condition}
  \int_{\partial\Omega} \grad \Psi_{k,0}\!\cdot\! \grad \Psi_{l,0}\mathrm{d}\sigma_{\!x} =0\,.
\end{equation}

Let us explain more as in \cite{JM15} how the condition \eqref{condition} should be enforced.
{\em Assume that $\lambda^2$ is an eigenvalue of \eqref{Laplace} and denote by $\mathrm{H}_0=\mathrm{H}_0(\lambda)$ the eigenspace associated to $\lambda^2$, i.e.
\begin{equation*}\label{Hspace}
     \mathrm{H}_0(\lambda)= \{\Psi \in \mathcal{D}(-\Delta_{\!x}): -\Delta_{\!x} \Psi= \lambda^2 \Psi\quad\!\! \mbox{in}\quad\!\! \Omega,\ \ \tfrac{\p \Psi}{\p \mathrm{n}}=0 \quad\!\! \mbox{on}\quad\!\! {\partial\Omega}\},
\end{equation*}
where $\mathcal{D}(-\Delta_{x})= H^2(\Omega) \cap \{\Psi : \tfrac{\p \Psi}{\p \mathrm{n}}=0 \text{ on } {\partial\Omega}\}$ denotes the domain of $-\Delta_x$ with Neumann boundary condition. On the finite dimensional space $\mathrm{H}_0(\lambda)$, we can define a quadratic
form $Q_1$   (its associated bilinear form is also  denoted by $Q_1$)
  and a symmetric operator $L_1=L^\lambda_1$ by
\begin{equation*}\label{L1def}
   Q_1(\Psi,\tilde{\Psi}) = \int_{\partial\Omega} \grad \Psi\!\cdot\! \grad \tilde{\Psi}\,\mathrm{d}\sigma_{\!x}= \int_\Omega L_1(\Psi)\tilde{\Psi}\,\mathrm{d}x\,.
\end{equation*}
Thus the condition \eqref{condition} can be restated as
\begin{equation*}\label{condition0}
     Q_1(\Psi_{k,0},\Psi_{l,0})=0\,,\quad \mbox{if}\quad\! \Psi_{k,0}, \Psi_{l,0}\in \mathrm{H}_0(\lambda)\quad\!\mbox{and}\quad\! k\neq l\,.
\end{equation*}
This condition means that the eigenvectors $\Psi_{k,0}$ for $\lambda_{k,0}=\lambda$ are orthogonal for the symmetric operator $L_1$. Of course, since $L^2(\Omega)$ is the direct sum of the spaces $\mathrm{H}_0(\lambda)$ for different $\lambda's$,
from the definition of $L_1$ on each eigenspace $\mathrm{H}_0(\lambda)$, we can define an operator $L^\lambda_1$ on $L^2(\Omega)$ which leaves each eigenspace $\mathrm{H}_0(\lambda)$ invariant. But this is not necessary, so we will think of $L^\lambda_1$ as acting on $\mathrm{H}_0(\lambda)$ for a fixed multiple eigenvalue $\lambda$.

The orthogonality condition \eqref{condition} turns out to be enough for the construction of the boundary layer if the eigenvalues of $L_1$ are simple, namely, if
\begin{equation*}\label{condition1}
    \lambda_{k,1}= \int_{\partial\Omega}|\grad \Psi_{k,0}|^2\,\mathrm{d}\sigma \neq \int_{\partial\Omega}|\grad \Psi_{l,0}|^2\,\mathrm{d}\sigma_{\!x} = \lambda_{l,1}
\end{equation*}
for all $l\neq k$ such that $\lambda_{l,0}=\lambda_{k,0}=\lambda$. However, if $\lambda_1$ is an eigenvalue of $L_1$ with multiplicity greater than or equal to $2$, more precisely, if there exists $l\neq k$ such that $\lambda_{l,0}=\lambda_{k,0}=\lambda$, and $\lambda_{l,1}=\lambda_{k,1}=\lambda_1$, then we need an extra orthogonality condition as following: Let $\mathrm{H}_1=\mathrm{H}_1(\lambda_1)$ be defined by
\begin{equation*}\label{Hspace1}
    \mathrm{H}_1(\lambda_1)= \{ \Psi\in \mathrm{H}_0(\lambda) : L_1 \Psi = \lambda_1 \Psi \}\,.
\end{equation*}
 On the finite dimensional space $\mathrm{H}_1$, there exists a quadratic form $Q_2$ and a symmetric operator $L_2$ [see the definition \eqref{L2} below], the extra condition is
\begin{equation*}\label{condition11}
    Q_2(\Psi_{k,0},\Psi_{l,0})=0\,,\quad \mbox{if}\quad\! \Psi_{k,0},
    \Psi_{l,0}\in \mathrm{H}_1(\lambda_1)\quad\!\mbox{and}\quad\! k\neq l\,.
\end{equation*}
This condition is enough if $L_2$ has only simple eigenvalue on the vector space $\mathrm{H}_1$. However, if $L_2$ has eigenvalue with multiplicity greater than or equal to 2, we need additional condition on $\mathrm{H}_2$, and so on. This process can be continued inductively to find the  condition we have to impose on the eigenvectors of $-\Delta_x$.}

The present paper is strucured as follows. In section 2, we will focus on explanation of Mach-number-depending Navier-slip boundary condition. In section 3, we will constuct the boundary layer and prove the main Proposition \ref{thm:boundary-layer}. In section 4, we will give the details of proof of strong convergence of $\mathcal{Q} \mathbf{u}^\eps$. Finally, in the appendix, we collect some lemmas which will be used in the proof of the Proposition \ref{thm:boundary-layer}. We end this short introduction with the statement of our result of this paper, which is the same as the Proposition 2 in \cite{DGLM99}, except that we add the higher order orthogonality conditions. As we mentioned above, the main purpose of the present paper is to fill the gap in the proof of the Proposition 2 in \cite{DGLM99} and to rewrite it in the case of Navier-slip boundary condition.
 \begin{theorem} \label{thm:Incompressible Limit}
Under the above assumption conditions, when the isentropic compressible Navier-Stokes equations \eqref{eq:CNS} equipped with the Navier-Slip boundary conditions \eqref{eq:Robin BC}, then $\rho^{\eps}$ converges to 1 in $C([0,T];L^{\gamma}(\Omega))$ and $\mathbf{u}^{\eps}$converges to $\bar{\mathbf{u}}$ weakly in $L^2((0, T)\times\Omega)^d$ for all $T>0$ and strongly if $\Omega$ satisfies (H). In addition, $\bar{\mathbf{u}}$ is a global weak solution of the incompressible Navier-Stokes equations \eqref{In-CNS} satisfying $\bar{\mathbf{u}}|_{t=0}=\mathbf{u}_0$ in $\Omega$.
\end{theorem}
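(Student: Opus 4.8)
The plan is to follow the strategy of \cite{DGLM99}, the genuinely new ingredient being the construction of the viscous boundary layer adapted to the Mach-number-dependent Navier-slip condition \eqref{eq:Robin BC}, which is the content of Proposition \ref{thm:boundary-layer}; once that is available, Theorem \ref{thm:Incompressible Limit} is assembled from standard compactness. First, from the energy inequality — which for the boundary condition \eqref{eq:Robin BC} also controls the boundary dissipation $\tfrac{\chi}{\sqrt\eps}\int_0^t\!\int_{\partial\Omega}|(\mathbf{u}^\eps)^\tau|^2\,\d\sigma_{\!x}\,\d s$ — I would extract the uniform bounds $\sqrt{\rho^\eps}\,\mathbf{u}^\eps\in L^\infty(0,T;L^2)$, $\pi^\eps\in L^\infty(0,T;L^1)$, $\Psi^\eps$ bounded in $L^\infty(0,T;L^2+L^\gamma)$, and $\mathbf{u}^\eps$ bounded in $L^2(0,T;H^1)$. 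Since $\rho^\eps-1=\eps\Psi^\eps$ this already gives $\rho^\eps\to 1$ in $L^\infty(0,T;L^\gamma)$, and using the continuity equation to bound $\partial_t\rho^\eps$ together with the a priori regularity $\rho^\eps\in C([0,\infty);L^p)$, $p<\gamma$, one upgrades this to $C([0,T];L^\gamma)$. Along a subsequence $\mathbf{u}^\eps\rightharpoonup\bar{\mathbf{u}}$ weakly in $L^2(0,T;H^1)$; the identity $\mathrm{div}\,\mathbf{m}^\eps=-\eps\,\partial_t\Psi^\eps\to 0$ in $\mathcal{D}'$ forces $\mathrm{div}\,\bar{\mathbf{u}}=0$, while $\mathbf{u}^\eps\cdot\mathrm{n}=0$ and $\|(\mathbf{u}^\eps)^\tau\|^2_{L^2((0,T)\times\partial\Omega)}\le C\sqrt\eps\to 0$ give $\bar{\mathbf{u}}=0$ on $\partial\Omega$.

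Next I would split $\mathbf{u}^\eps=\mathcal{P}\mathbf{u}^\eps+\mathcal{Q}\mathbf{u}^\eps$, with $\mathcal{P}$ the Leray projection onto divergence-free fields tangent to $\partial\Omega$ and $\mathcal{Q}=\mathrm{Id}-\mathcal{P}$. Applying $\mathcal{P}$ to the momentum equation in \eqref{eq:CNS-v} kills the singular acoustic term $\eps^{-1}\grad\Psi^\eps$ and the pressure term $\grad\pi^\eps$, leaving $\partial_t\mathcal{P}(\rho^\eps\mathbf{u}^\eps)$ bounded in $L^1(0,T;H^{-s}(\Omega))$ for $s$ large enough; combined with the $L^2(0,T;H^1)$ bound on $\mathbf{u}^\eps$, a standard compactness argument in the spirit of \cite{LM} shows $\mathcal{P}(\rho^\eps\mathbf{u}^\eps)$ is relatively compact in $L^2((0,T)\times\Omega)$, and since $(\rho^\eps-1)\mathbf{u}^\eps=\eps\Psi^\eps\mathbf{u}^\eps\to 0$ in $L^p((0,T)\times\Omega)$ for some $p>1$, also $\mathcal{P}\mathbf{u}^\eps\to\mathcal{P}\bar{\mathbf{u}}=\bar{\mathbf{u}}$ strongly in $L^2$.

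The crux is the acoustic component. The pair $\phi^\eps=(\Psi^\eps,\mathbf{m}^\eps)^\top$ satisfies $\eps\,\partial_t\phi^\eps+\mathcal{A}_\eps\phi^\eps=\eps F^\eps$ with $F^\eps$ — collecting the convective term, the pressure term $(\gamma-1)\grad\pi^\eps$, and a lower-order viscous remainder — bounded in a fixed negative-order Sobolev space. One expands $\phi^\eps$ on the viscous wave modes of $\mathcal{A}_\eps$ adapted to \eqref{eq:Robin BC}: because $\eps\mathcal{D}$ is a singular perturbation of $\mathcal{A}$, these modes carry a boundary layer of width $\sqrt\eps$ — the slip length $\sqrt\eps^{-1}$ in \eqref{eq:Robin BC} being chosen precisely so that the induced boundary dissipation is of the same order as in the Dirichlet case of \cite{DGLM99} — and their eigenvalues have the form $\tau^{\pm}_{k,\eps}=\pm i\lambda_{k,0}+\sqrt\eps\,\mu_k+o(\sqrt\eps)$ with $\mathrm{Re}\,\mu_k<0$; this is exactly Proposition \ref{thm:boundary-layer}, and it is here that assumption (H) and the hierarchy of orthogonality conditions \eqref{condition} (and its higher-order analogues built from $L_1,L_2,\dots$) are used, in order to diagonalize within the multiple eigenspaces. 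The modal amplitudes $b^{\pm,\eps}_k(t)$ then satisfy scalar ODEs with damping rate $\mathrm{Re}\,\tau^{\pm}_{k,\eps}/\eps\sim\mu_k/\sqrt\eps\to-\infty$ and bounded forcing, so $|b^{\pm,\eps}_k(t)|\le e^{-ct/\sqrt\eps}|b^{\pm,\eps}_k(0)|+O(\sqrt\eps)$: for each fixed $t>0$ the finitely many low modes vanish in the limit, the high-frequency tail is uniformly small by the energy bound, and one concludes $\mathcal{Q}\mathbf{u}^\eps\to 0$ strongly in $L^2((0,T)\times\Omega)$ under (H). This is the content of Section 4.

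Finally, strong $L^2$ convergence of both components yields $\mathbf{u}^\eps\to\bar{\mathbf{u}}$ strongly in $L^2((0,T)\times\Omega)^d$ and hence $\rho^\eps\mathbf{u}^\eps\otimes\mathbf{u}^\eps\to\bar{\mathbf{u}}\otimes\bar{\mathbf{u}}$ in $\mathcal{D}'$; testing the momentum equation against $\mathbf{v}\in C^\infty_c(\Omega\times(0,T))$ with $\mathrm{div}\,\mathbf{v}=0$ makes the gradients $\eps^{-1}\grad\Psi^\eps$, $\grad\pi^\eps$ and $\grad(\rho^\eps)^\gamma$ disappear, lets the viscous term pass to the limit by weak convergence of $\grad\mathbf{u}^\eps$ (with $\mathrm{div}\,\bar{\mathbf{u}}=0$ reducing $\Sigma(\bar{\mathbf{u}})$ to $\nu\sigma(\bar{\mathbf{u}})$), and shows $\bar{\mathbf{u}}$ is a global weak solution of \eqref{In-CNS}, with the energy inequality following from weak lower semicontinuity and $\bar{\mathbf{u}}|_{t=0}=\mathbf{u}_0$ from the weak convergence of the initial momenta and the weak time-continuity of $\mathcal{P}\mathbf{m}^\eps$; when $\Omega$ does not satisfy (H) the acoustic part need not vanish, but its contribution to the quadratic term averages to a gradient and is absorbed into the pressure (the group method of Schochet used in \cite{LM,DGLM99}), so $\bar{\mathbf{u}}$ is still the asserted weak solution and $\mathbf{u}^\eps\rightharpoonup\bar{\mathbf{u}}$ weakly. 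I expect the main obstacle to be precisely the boundary-layer step: constructing the layer for the Navier-slip condition and showing the acoustic modes are damped at rate $\sim\sqrt\eps^{-1}$, where the subtle point — the gap in \cite{DGLM99} that the paper fills — is running the induction when the reduced symmetric operators $L_1,L_2,\dots$ themselves have repeated eigenvalues, which forces the successive orthogonality conditions to be imposed.
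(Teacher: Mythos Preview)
Your proposal is correct and follows essentially the same route as the paper: the classical weak-convergence and $\mathcal{P}\mathbf{u}^\eps$-compactness steps are delegated to \cite{LM,DGLM99}, the new content is Proposition \ref{thm:boundary-layer} (the boundary layer for the $\sqrt\eps^{-1}$-slip condition, with the inductive orthogonality conditions to handle repeated eigenvalues of $L_1,L_2,\dots$), and the damping of $\mathcal{Q}\mathbf{u}^\eps$ is extracted from the scalar ODE for the modal amplitudes $b^{\pm}_{k,\eps}$, with the undamped modes (when $\mathcal{J}\neq\varnothing$, i.e.\ when (H) fails) handled by the Schochet group-method reduction to a gradient.

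One point you pass over that the paper makes explicit: to obtain the clean ODE \eqref{eq:ODE for b} one must test \eqref{eq:CNS-v} against the approximate eigenvectors $\phi^\pm_{k,\eps,2}$ in such a way that no boundary terms survive. Since the diffusion acts on $\mathbf{u}^\eps$ while the evolution is written for $\mathbf{m}^\eps=\rho^\eps\mathbf{u}^\eps$, the paper introduces a \emph{dual} Navier-slip condition \eqref{Robin-BC-dual} on the test functions and checks the self-adjointness identity $\skp{{\rm div}\Sigma(\mathbf{u}^\eps)}{\mathbf{m}}=\skp{\mathbf{u}^\eps}{{\rm div}\Sigma(\mathbf{m})}$ (Section \ref{sub:dual_formation}); this is what allows the boundary integrals to cancel and yields the exact dual form \eqref{eq:short dual form} that you implicitly use. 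It is a technical but necessary step specific to the slip boundary, and you should be aware that your ``bounded forcing'' in the modal ODE relies on it.
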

\begin{remark}
 In fact, in this theorem, $\rho^{\eps}$ converges to 1 in $C([0,T];L^{\gamma}(\Omega))$ and $\mathbf{u}^{\eps}$converges to $\bar{\mathbf{u}}$ weakly in $L^2((0, T )\times\Omega)^d$ are classical results which have been done in a great quantity literature, for example, \cite{LM,DGLM99}. In the following, we will only construct the acoustic boundary layers which yields the strong convergences if $\Omega$ satisfies (H).
\end{remark}


\section{Boundary Conditions depending upon the Mach Number} 
\label{sec:BC_upon_Mach}


\subsection{Navier-slip boundary conditions of Navier-Stokes equation and Maxwell boundary conditions of Boltzmann equation} 
To understand the Navier-slip boundary conditions given before which depends on the Mach number, it may be a better way to start from a viewpoint of the hydrodynamic limit at the microscopic (kinetic) level. We introduce the initial-boundary value problem of the rescaled Boltzmann equation in a bounded domain $\Omega$,
\begin{equation} \label{eq:BE}
  \begin{cases}
	\varepsilon \partial_t F_\varepsilon (t,\, x,\, v) + v \cdot \nabla_{x} F_\varepsilon (t,\, x,\, v) = \frac{1}{\varepsilon} \mathcal{Q}(F_\varepsilon, F_\varepsilon),\\
F_\varepsilon (t,\, x,\, v)|_{t=0}=F_\varepsilon^{in}(x,\, v) \ge 0,\\
F_\varepsilon (t,\, x,\, v)|_{\mathit{\Sigma}_-} = (1-\alpha_\varepsilon) \mathcal{R} F_\varepsilon (t,\, x,\, v)|_{\mathit{\Sigma}_+}+\alpha_\varepsilon \mathcal{K} F_\varepsilon (t,\, x,\, v)|_{\mathit{\Sigma}_+},
  \end{cases}
\end{equation}
where the unknown function $F = F(t,\, x,\, v)$ denotes the density distribution function of molecules located at position $x \in \Omega$ with velocity $v \in \mathbb{R}^3$ at time $t \ge 0$. The right-hand side of the first equation is the Boltzmann collision operator $\mathcal{Q}(F, F)$. We should mention that we will not go into particulars since our main concern in this subsection is the reasonableness of the kind of Navier-slip boundary conditions depending upon the Mach number.

The boundary condition \eqref{eq:BE}$_3$ is the so-called Maxwell's condition which describes the balance between the outgoing and incoming part of the trace of $F$, where we defined the outgoing and incoming sets $\mathit{\Sigma}_+$ and $\mathit{\Sigma}_-$ on the boundary $\partial \Omega$ by
\begin{equation*}
	 \mathit{\Sigma}_\pm
	 =\left\{ (x,\, v) \in \partial \Omega \times \mathbb{R}^3, \ \pm v \cdot n(x) >0 \right\}
\end{equation*}
with respect to the outward normal $n(x)$ at point $x \in \partial \Omega$. In addition, the notations $\mathcal{R}$ and $\mathcal{K}$ stand for the specular reflection operator and diffusion reflection operator, respectively, whose exact definition can be found in many references and we will not expand in this point. At last, the non-dimensional parameter $\alpha_\eps$ in equation \eqref{eq:BE}$_3$ is called the accommodation coefficient, which measures the roughness of the boundary. In a general setting, it takes the form of $\alpha_\eps = \sqrt{2\pi} \chi \eps^\beta$ with $\chi \ge 0,\, \beta \in [0,\, 1]$.



It should be pointed out that the non-dimensional parameter $\eps >0$ is the Knudsen number defined as the ratio of the mean free path and the macroscopic length scale. Hydrodynamic regimes are those in which the Knudsen number $\eps$ is small.

Formally, under the Navier-Stokes scaling where the Mach number is of the same order as the Knudsen number, the corresponding hydrodynamic equations can be derived, which takes the form of the incompressible Navier-Stokes-Fourier equations, and the compressible ``Navier-Stokes-Fourier asmptotes'' in an asymptotic sense,
\begin{align} \nonumber
	\begin{cases}
   \partial_t \rho^\eps + \frac{1}{\eps} {\rm div} \mathbf{u}^\eps = 0 ,\\ 
   \partial_t \mathbf{u}^\eps + \frac{1}{\eps} \nabla_x (\rho^\eps + \theta^\eps) + {\rm div}\left(\mathbf{u}^\eps\otimes\mathbf{u}^\eps \right)=\nu \left( \nabla_x \mathbf{u}^\eps + \nabla_x^\top \mathbf{u}^\eps-\frac{2}{3} {\rm div} \mathbf{u}^\eps Id \right),\\
  \frac{3}{2} \partial_t \theta^\eps + \frac{1}{\eps} {\rm div} \mathbf{u}^\eps+\mathbf{u}^\eps \cdot\nabla_x\theta^\eps=\kappa\Delta_x \theta^\eps.
	\end{cases}
\end{align}
Correspondingly, the fluid boundary conditions can also be derived, i.e.,
\begin{align} \label{eq: NSF-Slip BC}
	\begin{cases}
	\mathbf{u}^\eps \cdot n = 0, \text{\quad on } \partial \Omega,\\
	\nu \left[ \sigma(\mathbf{u}^\eps) \cdot n\right] ^\tau +\chi\eps^{\beta-1} 
	(\mathbf{u}^\eps)^\tau =0, \text{\quad on } \partial \Omega,\\
	\kappa \nabla_x \theta^\eps \cdot n + \frac{4}{5}\chi \eps^{\beta-1}\theta^\eps =0,
	\text{\quad on } \partial \Omega.
	\end{cases}
\end{align}
Recalling that we only deal with the isentropic case due to the difficulties of the Lions type renormalized weak solutions to Navier-Stokes-Fourier system, the equations \eqref{eq: NSF-Slip BC} will reduce to the preceding Navier-slip boundary conditions \eqref{eq:Robin BC}, in this paper, we only want to reveal the same ``damping effect" aroused by the Navier-slip boundary condition as the Dirichlet boundary condition\cite{DGLM99}, so we take a particular case $\beta = \frac{1}{2}$ for simplicity.

\subsection{Dual formation and dual boundary conditions} \label{sub:dual_formation}
To obtain the dual formation of the compressible Navier-Stokes system \eqref{eq:CNS-v}, we take inner product of the momentum equation with the test function $\mathbf{m}$ satisfying $\mathbf{m} \cdot n =0$, and get that
\begin{align}\nonumber
  \frac{\rm d}{{\rm d} t}\skp{\mathbf{m}^\eps}{\mathbf{m}}+\frac{1}{\eps} \skp{\nabla_x \Psi^\eps}  {\mathbf{m}}-\skp{{\rm div} \Sigma(\mathbf{u}^\eps)}{\mathbf{m}}=\skp{-{\rm div}(\mathbf{m}^\eps  \otimes \mathbf{u}^\eps)-(\gamma-1) \nabla_x \pi^\eps}{\mathbf{m}}.
\end{align}
Define the ``dual'' Navier-slip boundary conditions for $\mathbf{m}$ as
\begin{align}\label{Robin-BC-dual}
  \begin{cases}
	 \mathbf{m}\cdot n =0, &\text{ on } \partial \Omega \times (0,\infty), \\
	  \nu [\sigma (\mathbf{m}) \cdot n]^\tau+\chi\sqrt\eps^{-1} \mathbf{m}^\tau =0, &\text{ on }\partial\Omega \times (0,\infty),
  \end{cases}
\end{align}
then we can check the following ``dual'' property for the stress tersor
\begin{align}\nonumber
  \skp{{\rm div}\Sigma(\mathbf{u}^\eps)}{\mathbf{m}}=\skp{\mathbf{u}^\eps}{{\rm div}\Sigma(\mathbf{m})}.
  \end{align}
Indeed, by taking advantage of integrating by parts, we get
\begin{align*}
	 \skp{{\rm div} \Sigma(\mathbf{u}^\eps)}{\mathbf{m}}
	 = &-\skp{\Sigma(\mathbf{u}^\eps)}{\nabla_x \mathbf{m}}+\int_{\partial \Omega}\nu[\sigma (\mathbf{u}^\eps) \cdot n]^\tau \mathbf{m}^\tau \d \sigma_x\\
	 = &- \skp{\nabla_x \mathbf{u}^\eps}{\Sigma(\mathbf{m})} - \int_{\partial \Omega} \chi \sqrt\eps^{-1} (\mathbf{u}^\eps)^\tau \mathbf{m}^\tau \d \sigma_x\\
	 = &\skp{\mathbf{u}^\eps}{{\rm div} \Sigma(\mathbf{m})} - \int_{\partial \Omega} (\mathbf{u}^\eps)^\tau \left\{ \nu [\sigma (\mathbf{m}) \cdot n]^\tau + \chi \sqrt\eps^{-1} \mathbf{m}^\tau \right\} \d \sigma_x\\
	 = &\skp{\mathbf{u}^\eps}{{\rm div} \Sigma(\mathbf{m})}.
\end{align*}
On the other hand, the dual boundary condition $\mathbf{m} \cdot n =0$ also yields that
\begin{equation*}
  \skp{\nabla_x \Psi^\eps}{\mathbf{m}}=-\skp{\Psi^\eps}{{\rm div} \mathbf{m}},\quad
  \skp{-\nabla_x \pi^\eps}{\mathbf{m}}=\skp{\pi^\eps}{{\rm div} \mathbf{m}},
\end{equation*}
and likewise, the original boundary condition $\mathbf{u}^\eps \cdot n =0$ yields that
\begin{align*}
	\skp{-{\rm div} (\mathbf{m}^\eps \otimes \mathbf{u}^\eps)}{\mathbf{m}}=\skp{\mathbf{m}^\eps \otimes \mathbf{u}^\eps}{\nabla_x \mathbf{m}}.
\end{align*}
Thus, combined with the inner product of the master equation with test function $\Psi$, the above discuss implies the ``dual'' formation of the compressible Navier-Stokes system \eqref{eq:CNS-v}, i.e.,
\begin{align} \label{eq:CNS dual}
  \begin{cases}
  \frac{\rm d}{{\rm d} t} \skp{\Psi^\eps}{\Psi}-\frac{1}{\eps}\skp{\mathbf{m}^\eps}{\nabla_x \Psi}=0,\\
	  \begin{aligned}
	  \frac{\rm d}{{\rm d} t} \skp{\mathbf{m}^\eps}{\mathbf{m}}
	   &-\frac{1}{\eps} \skp{\Psi^\eps}{{\rm div} \mathbf{m}}
	  -\skp{\mathbf{m}^\eps - \eps \Psi^\eps \mathbf{u}^\eps}{{\rm div} \Sigma (\mathbf{m})}\\
	   =&-\eps \skp{\Psi^\eps \mathbf{u}^\eps}{{\rm div} \Sigma (\mathbf{m})}+\skp{\mathbf{m}^\eps \otimes\mathbf{u}^\eps}{\nabla_x \mathbf{m}}+(\gamma-1)\skp{\pi^\eps}{{\rm div} \mathbf{m}},
	  \end{aligned}
  \end{cases}
\end{align}
where we have used the fact $\mathbf{u}^\eps = \mathbf{m}^\eps - \eps \Psi^\eps \mathbf{u}^\eps$.

Let $\phi^\eps = (\Psi^\eps,\, \mathbf{m}^\eps)^\top$ and $\phi = (\Psi,\, \mathbf{m})^\top$, then we can write
	\begin{align}\label{eq:short dual form}
	  \frac{\rm d}{{\rm d} t} \skp{\phi^\eps}{\phi} - \frac{1}{\eps} \skp{\phi^\eps}{\mathcal{A}_\eps \phi} = c^\eps(\mathbf{m}),
	\end{align}
here we used the notations $\mathcal{A}_\eps = \mathcal{A} + \eps \mathcal{D}$ and
	\begin{align*}
	  c^\eps(\mathbf{m})
	= - \eps \skp{\Psi^\eps \mathbf{u}^\eps}{{\rm div} \Sigma (\mathbf{m})}
	 	+ \skp{\mathbf{m}^\eps \otimes \mathbf{u}^\eps}{\nabla_x \mathbf{m}}
	  + (\gamma-1) \skp{\pi^\eps}{{\rm div} \mathbf{m}} .
	\end{align*}

\begin{remark}
\begin{enumerate}
\item The dual boundary condition proposed here should be understood in the sense of dual equations \eqref{eq:CNS dual}, rather than in the sense of dual diffusion operator, which relies on the fact that the equation \eqref{eq:CNS-v}$_2$ concerns the momentum $\mathbf{m}^\eps = \rho^\eps \mathbf{u}^\eps$ while the diffusion contribution ${\rm div} \Sigma(\mathbf{u}^\eps)$ contains only the velocity $\mathbf{u}^\eps$. The advantage of the definition \eqref{Robin-BC-dual} is the boundary integration will vanishes as appeared in the equation \eqref{eq:short dual form}.
\item On the other hand, we mention here that by noticing that $\mathbf{u}^{\eps}=\mathbf{m}^{\eps}-\eps\Psi^{\eps}\mathbf{u}^{\eps}$, then $\eqref{eq:Robin BC}_2$ can be recast as
\begin{equation*}
    \nu [\sigma (\mathbf{m}) \cdot n]^\tau + \chi \sqrt\eps^{-1} \mathbf{m}^\tau
    -\eps\left\{\nu [\sigma (\Psi^{\eps}\mathbf{u}^{\eps})\cdot n]^\tau
    +\chi\eps^{-1/2} [{\Psi^{\eps}\mathbf{u}^{\eps}}]^\tau\right\}=0,
\end{equation*}
which is equivalent to $\eqref{Robin-BC-dual}_2$ in the sense of ignoring the higher order term of $\eps$ in above expansion equation.
\item In the next section for analyzing the boundary layer, our starting point is the dual operator $\mathcal{A}_\eps = \mathcal{A} + \eps \mathcal{D}$ with respect to the original operator $\mathcal{A} - \eps \mathcal{D}$ in the sense of dual formation \eqref{eq:CNS dual}. That is why we are required to consider the above dual boundary conditions.
\end{enumerate}
\end{remark}

\section{Construction of boundary layer} 

\begin{proposition} \label{thm:boundary-layer}
Let $\Omega$ be a $C^2$ bounded domain of $\mathbb{R}^d$ and let $k\geq 1$, $N\geq 0$. Let the eigenvectors $\Psi_{k,0}$ of $-\Delta_x$ satisfy the orthogonality conditions \eqref{condition} and \eqref{orth-cond}. Then, there exists approximate eigenvalues $i\lambda^\pm_{k,\eps,N}$ and eigenvectors $\phi^\pm_{k,\eps,N}=(\Psi^\pm_{k,\eps,N}, \mathbf{m}^\pm_{k,\eps,N})^\top $ of $\mathcal{A}_\eps$ such that
\begin{equation*}\label{spectral}
   \mathcal{A}_\eps \phi^\pm_{k,\eps,N} = i\lambda^\pm_{k,\eps,N} \phi^\pm_{k,\eps,N} + \mathcal{R}^\pm_{k,\eps,N}\,,
\end{equation*}
with
\begin{equation*}\label{spectral2}
   i \lambda^\pm_{k,\eps,N}= \pm i \lambda_{k,0} + i \lambda^\pm_{k,1}\sqrt{\eps} + O(\eps)\,,
\end{equation*}
and the real part of $i \lambda^\pm_{k,1}$, i.e. $\mathfrak{Re}(i \lambda^\pm_{k,1})\leq0$. Furthermore, for all $p\in [1,\infty]$, we have
\begin{equation}\label{error}
    \|\mathcal{R}^\pm_{k,\eps,N}\|_{L^p(\Omega)} \leq C_p (\sqrt{\eps})^{N+\frac{1}{p}}
    \quad \text{ and \ }
    \|\phi^\pm_{k,\eps,N} - \phi^\pm_{k,0} \|_{L^p(\Omega)} \leq C_p (\sqrt{\eps})^{\frac{1}{p}} \,.
	\end{equation}
\end{proposition}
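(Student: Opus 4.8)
The plan is to construct $\phi^\pm_{k,\eps,N}$ as a two-scale asymptotic expansion in powers of $\sqrt\eps$, combining an interior part that is a perturbation of the acoustic mode $\phi^\pm_{k,0}$ from \eqref{phi-int-0} with a boundary-layer corrector living at the fast scale $z = \mathrm{dist}(x,\partial\Omega)/\sqrt\eps$. Concretely, I would posit
\begin{equation*}
  \phi^\pm_{k,\eps,N} = \sum_{j=0}^{2N} (\sqrt\eps)^{j}\left( \phi^\pm_{k,j}(x) + \varphi^\pm_{k,j}(x, z) \right), \qquad
  i\lambda^\pm_{k,\eps,N} = \sum_{j=0}^{2N} (\sqrt\eps)^{j}\, i\lambda^\pm_{k,j},
\end{equation*}
where each $\varphi^\pm_{k,j}$ decays exponentially in $z$, and then plug into $\mathcal{A}_\eps \phi = \mathcal{A}\phi + \eps\mathcal{D}\phi = i\lambda\phi$. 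Since $\mathcal{D}$ carries the factor $\eps$ and involves two $x$-derivatives, in the layer variable it contributes at order $\eps \cdot (\sqrt\eps)^{-2} = 1$; this is precisely the balance that makes the layer nontrivial. Collecting powers of $\sqrt\eps$ yields a hierarchy of equations: at order $(\sqrt\eps)^0$ one recovers the acoustic eigenvalue problem \eqref{acoustic-mode}, fixing $\phi^\pm_{k,0}$ and $\lambda^\pm_{k,0} = \pm\lambda_{k,0}$; at order $(\sqrt\eps)^1$ one gets an ODE in $z$ for the leading layer profile $\varphi^\pm_{k,1}$ (a constant-coefficient system because $\mathcal{D}$ frozen at the boundary is $-\nu\partial_z^2$ in the normal direction acting on the tangential momentum components), whose solvability and decay conditions determine $\lambda^\pm_{k,1}$. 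The real part of $i\lambda^\pm_{k,1}$ will be $\le 0$ because the layer dissipates energy — this sign is forced by the fact that the ODE profile is (roughly) $e^{-(1\mp i)\sqrt{\lambda_{k,0}/2\nu}\, z}$ times boundary data, and the Rayleigh-type quotient computing $\lambda^\pm_{k,1}$ picks up $\int_{\partial\Omega}|\grad\Psi_{k,0}|^2$ with a definite sign; this is exactly where $\lambda_{k,1} = \int_{\partial\Omega}|\grad\Psi_{k,0}|^2\,d\sigma$ enters.

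The induction then proceeds: at each order $(\sqrt\eps)^{j}$, $j\ge 2$, one solves first the interior equation $\mathcal{A}\phi^\pm_{k,j} - i\lambda^\pm_{k,0}\phi^\pm_{k,j} = (\text{known lower-order terms}) + i\lambda^\pm_{k,j}\phi^\pm_{k,0}$, which is solvable iff the right side is orthogonal to the kernel of $(\mathcal{A}-i\lambda^\pm_{k,0})^*$; this Fredholm condition determines $\lambda^\pm_{k,j}$ and the component of $\phi^\pm_{k,j}$ along the eigenspace. When $\lambda_{k,0}$ is a \emph{multiple} eigenvalue of $-\Delta_x$, the kernel is higher-dimensional and the solvability condition at order $(\sqrt\eps)^1$ becomes a matrix eigenvalue problem for $L_1$ on $\mathrm{H}_0(\lambda)$ — this is why the orthogonality condition \eqref{condition} (diagonalizing $L_1$) is needed, and, at successively higher orders, the iterated conditions on $\mathrm{H}_1, \mathrm{H}_2,\dots$ (the condition \eqref{orth-cond} referenced in the statement) to keep the Fredholm alternative solvable with a well-defined choice of eigenvector branch. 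After fixing the interior part, the boundary data for $\varphi^\pm_{k,j}$ is determined by requiring the \emph{dual} Navier-slip boundary condition \eqref{Robin-BC-dual} to hold for the sum at order $(\sqrt\eps)^{j}$; one then solves the (now inhomogeneous, with polynomial-in-$z$ forcing from the Taylor expansion of the coefficients off the boundary) linear ODE in $z$ with exponential decay, which is always uniquely solvable. Truncating at $j = 2N$ leaves a residual $\mathcal{R}^\pm_{k,\eps,N}$ built from the first neglected terms, of size $O((\sqrt\eps)^{2N+1})$ pointwise in the layer; since the layer has thickness $\sqrt\eps$, its $L^p(\Omega)$ norm gains a factor $(\sqrt\eps)^{1/p}$, giving the stated bound $\|\mathcal{R}^\pm_{k,\eps,N}\|_{L^p} \le C_p (\sqrt\eps)^{N + 1/p}$ (up to relabeling $N$), and similarly $\|\phi^\pm_{k,\eps,N} - \phi^\pm_{k,0}\|_{L^p} \le C_p(\sqrt\eps)^{1/p}$ since the difference is $O(\sqrt\eps)$ in the interior and $O(1)$ in the layer.

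The main obstacle, and the place where the present paper claims to fill a gap in \cite{DGLM99}, is the multiple-eigenvalue bookkeeping: ensuring that at \emph{every} order $j$ the interior Fredholm condition can be met simultaneously with a consistent choice of eigenvector branch $\phi^\pm_{k,j}$, which requires that the operators $L_1, L_2, \dots$ (defined via the boundary quadratic forms $Q_1, Q_2, \dots$, with $L_2$ given by \eqref{L2}) be successively diagonalizable on the nested eigenspaces $\mathrm{H}_0 \supset \mathrm{H}_1 \supset \cdots$ — precisely the content of the orthogonality hypotheses \eqref{condition} and \eqref{orth-cond}. A secondary technical point is the regularity/flattening near $\partial\Omega$: since $\Omega$ is only $C^2$, one must be careful that the geometric coefficients (curvature terms, the Jacobian of the boundary normal coordinates) that appear when rewriting $\mathcal{A}_\eps$ in $(x_{\mathrm{tangential}}, z)$ coordinates are regular enough to carry out the Taylor expansion to order $2N$; this likely forces either a localization argument or an additional smoothness assumption absorbed into the constants, and is where I would expect the appendix lemmas to be invoked. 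The sign $\mathfrak{Re}(i\lambda^\pm_{k,1}) \le 0$ itself is not hard once the leading layer ODE is solved explicitly, but verifying it cleanly for the multiple-eigenvalue case (where $\lambda^\pm_{k,1}$ is an eigenvalue of $L_1$ times a fixed complex constant with negative real part) is the payoff of the whole construction.
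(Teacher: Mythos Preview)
Your proposal captures the correct overall architecture --- two-scale ansatz in powers of $\sqrt\eps$, alternating interior/layer hierarchy, Fredholm solvability fixing the $\lambda_{k,j}$, and the nested orthogonality conditions \eqref{condition}, \eqref{orth-cond} handling multiple eigenvalues --- and this is indeed the paper's approach. A few points where your description diverges from what actually happens:

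\textbf{(i)} The correction $\lambda^\pm_{k,1}$ is \emph{not} determined by any solvability or decay condition of the layer ODE. The layer ODE for the tangential momentum (equation \eqref{ODE-mb-0}) is a second-order constant-coefficient ODE with Robin data at $\zeta=0$ and decay at $\infty$; since $\mathfrak{Re}(c_\nu)>0$ it is \emph{always} uniquely solvable. What fixes $\lambda^\pm_{k,1}$ is the Fredholm condition for the \emph{interior} operator equation $(\A-i\lambda_{k,0})\phi^{\Int}_{k,1}=i\lambda_{k,1}\phi^{\Int}_{k,0}$ (Lemma~\ref{Solve-A}), into which the layer feeds through the boundary condition $\mathbf{m}^{\Int}_{k,1}\!\cdot\!\mathrm{n}=-\mathbf{m}^{\b}_{k,1}\!\cdot\!\mathrm{n}$. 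You have this mechanism right for $j\ge 2$ but misattribute it for $j=1$.

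\textbf{(ii)} The order of resolution is the reverse of what you wrote: at each stage one solves the boundary-layer equations \emph{first} (in three sub-steps: $\Psi^\b_{k,j}$, the tangential part $\mathbf{m}^\b_{k,j-1}\!\cdot\!\grad\pi$, then the normal part $\mathbf{m}^\b_{k,j}\!\cdot\!\grad\mathrm{d}$ of the next level), and only \emph{then} the interior equation, because the interior problem needs the layer's normal trace as its boundary datum. The layer tangential ODE in turn uses the interior tangential trace from the \emph{previous} level as Robin data. So the coupling is genuinely alternating, not ``interior first, then layer''.

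\textbf{(iii)} The feature specific to the Navier-slip condition \eqref{Robin-BC-dual} --- and the point of the paper versus \cite{DGLM99} --- is the prefactor $\chi/(\chi+\nu c_\nu)$ in the layer solution \eqref{sol-b-u-0}, which propagates into the damping constant $\Lambda$ in \eqref{lambda-1}. Your exponential profile $e^{-(1\mp i)\sqrt{\lambda_{k,0}/2\nu}\,z}$ is the Dirichlet case; here the decay rate is the same but the amplitude carries the slip-length dependence, and one must verify that $\mathfrak{Re}(\Lambda)\le 0$ persists.

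\textbf{(iv)} The truncation index is $N$, not $2N$: the remainder after cutting the sum at $j=N$ is $O(\sqrt\eps^{\,N})$ pointwise in the layer, and the $L^p$ volume factor $(\sqrt\eps)^{1/p}$ gives \eqref{error} directly, with no relabeling needed.
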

\begin{remark} 
 \begin{itemize}
    \item[(1)]In our isentropic case, to get that $\mathfrak{Re}(i \lambda^\pm_{k,1})<0$, we need  an additional geometric condition on the domain $\Omega$, the ``assumption (H)''  in \cite{DGLM99}. For the non-isentropic case \cite{JM15}, this geometric condition is not needed because of the additional dissipative effect coming from heat conductivity, see the remark texts after \eqref{strict-disp}.
    \item[(2)]The proof gives a more precise expansion  of the eigenvalues $i\lambda^\pm_{k,\eps,N}$ and eigenvectors $\phi^\pm_{k,\eps,N}$.
\end{itemize}
\end{remark}
\begin{proof}
The main idea is to construct a boundary layer similar to the one  in \cite{DGLM99}. For the convenience of the readers, we give a complete proof here. The main idea is to build approximate modes of $\mathcal{A}_\eps$ in terms of $\phi^{\pm}_{k,0}$, we formally solve the equation
\begin{equation} \label{eigen-equation}
	\mathcal{A}_\eps \phi^\pm_{k,\eps,N}
	 =i\lambda^\pm_{k,\eps,N} \phi^\pm_{k,\eps,N} + \mathcal{R}^\pm_{k,\eps,N}\,,
\end{equation}
where we make for $\lambda^\pm_{k,\eps,N}$ and $\phi^\pm_{k,\eps,N}$ the following ansatz,
\begin{align}\label{ansatz-1}
 &\lambda^\pm_{k,\eps,N}= \sum^N_{i=0} \sqrt{\eps}^i \lambda ^\pm_{k,i},\ \text{ and }\\ \label{ansatz}
  & \phi^\pm_{k,\eps,N}
	   =\sum_{i=0}^N \sqrt{\eps}^i \left(\phi_{k,i}^{\pm,\Int}(x)
	   +\phi_{k,i}^{\pm,\b} \left( \pi(x),\, \zeta(x) \right) \chi(x) \right), 
	   \text{ with } \zeta(x) = \frac{\mathrm{d}(x)}{\sqrt{\eps}},
\end{align}		
where $\phi^{\pm,\Int}_{k,i}= (\Psi^{\pm,\Int}_{k,i}\,, \mathbf{m}^{\pm,\Int}_{k,i})^\top $ and $\phi^{\pm,\b}_{k,i}= (\Psi^{\pm,\b}_{k,i}\,, \mathbf{m}^{\pm,\b}_{k,i})^\top $ are smooth functions with the Navier-slip boundary conditions
\begin{align}\tag{BC$_i$} \label{eq:BC$_i$}
	\begin{cases} \displaystyle
	 \sum_{i=0}^N\sqrt{\varepsilon}^i\left(\mathbf{m}_{k,\, i}^{\pm,\Int}
	 +\mathbf{m}_{k,\,i}^{\pm,\b} \right) \cdot n = 0,\\
	  	\begin{aligned}
   \sum_{i=0}^N \sqrt{\varepsilon}^i \Bigg\{\nu\left[\left(\nabla_x\mathbf{m}_{k,\, i}^{\pm,\Int}
   +\nabla_x^\top \mathbf{m}_{k,\, i}^{\pm,\Int} \right) \cdot n \right] \cdot \nabla_x \pi^\alpha
   +\nu|\nabla_x \pi^\alpha|^2 \partial_{\pi^\alpha} (\mathbf{m}_{k,\, i}^{\pm,\b} \cdot n)& \\
   -\frac{\nu}{\sqrt{\varepsilon}}\partial_{\zeta}(\mathbf{m}_{k,i}^{\pm,\b}\cdot \nabla\pi^{\alpha})+\chi\varepsilon^{\frac{1}{2}-1}(\mathbf{m}_{k,i}^{\pm,\Int}+\mathbf{m}_{k,i}^{\pm,\b})\cdot\nabla\pi^{\alpha} \Bigg\} &=0,
       \end{aligned}
	 \end{cases}
\end{align}
on ${\partial\Omega}$, $\phi^{\pm,\b}_{k,i}$ being rapidly decreasing to $0$ in the $\zeta$ variable. We also require that
	\begin{equation*}\label{ortho2}
	   \< \phi^{\pm,\Int}_{k,j} \ |\ \phi^{\pm,\Int}_{k,0}\> = 0
	\end{equation*}
for all $j \geq 1$. Here the function $\mathrm{d}(x)$ is defined in \eqref{function-d}.

In the ansatz \eqref{ansatz}, $\chi(y)\in C^\infty_0(\Omega)$ is a smooth cut-off function such that $\chi(y)=1$ in a neighborhood of ${\partial\Omega}$ and $\chi(y)= 0$ if $\mathrm{d}(y) > \delta$ for some $\delta$ small enough. Here $\delta>0$ is to be chosen so that $\pi$ is uniquely determined in $\{0 < \mathrm{d}(y) < \delta \}$, as indicated by Lemma \ref{Projection}.

Straightforward calculations show that for $\phi^\b=(\Psi^\b\,, \mathbf{m}^\b)^\top $,
\begin{equation*}
		\mathcal{A}_\eps \phi^\b
	= \tfrac{1}{\sqrt{\eps}}\mathcal{A}^\mathrm{d} \phi^\b
		+ \mathcal{A}^\pi \phi^\b + \mathcal{D}^\b \phi^\b
		+ \sqrt{\eps} \mathcal{F}^\b \phi^\b + \eps \mathcal{G}^\b \phi^\b\,,
\end{equation*}
where
\begin{equation}\nonumber
		\mathcal{A}^\mathrm{d} \phi^\b = \begin{pmatrix}
		\p_\zeta(\mathbf{m}^\b\!\cdot\!\grad \mathrm{d})\\ 
		\p_\zeta\Psi^\b\grad \mathrm{d}\end{pmatrix}\,,\quad
		\mathcal{A}^\pi \phi^\b = \begin{pmatrix}
		\p_{\pi^\alpha}(\mathbf{m}^\b\!\cdot\! \grad \pi^\alpha)\\ 
		\p_{\pi^\alpha}\Psi^\b\grad \pi^\alpha
		   \end{pmatrix}\,,
	\end{equation}
\begin{equation}\nonumber
		\mathcal{D}^\b \phi^\b = \begin{pmatrix} 0 \\ \nu |\grad \mathrm{d}|^2 \partial^2_{\!\zeta}\mathbf{m}^\b + (\xi +\nu)\partial^2_{\!\zeta}(\mathbf{m}^\b\!\cdot\!
		\grad\mathrm{d} )\grad\mathrm{d}\end{pmatrix}\,,
\end{equation}
\begin{equation}\nonumber
		\begin{aligned}
		  \mathcal{F}^\b \phi^\b
		 = & \begin{pmatrix}
		 	0\\
	2 \nu \partial^2_{\!\zeta \pi^\alpha}\mathbf{m}^\b(\grad\mathrm{d}\!\cdot\! \grad\pi^\alpha) + (\xi+ \nu)[(\p^2_{\!\zeta \pi^\alpha}\mathbf{m}^\b\!\cdot\!\grad\pi^\alpha)\grad\mathrm{d} + (\p^2_{\!\zeta \pi^\alpha}\mathbf{m}^\b\!\cdot\!\grad\mathrm{d})\grad\pi^\alpha]
		 		 \end{pmatrix}\\ &
				+\begin{pmatrix}
			0\\
\nu \partial_{\zeta}\mathbf{m}^\b\Delta_{\!x}\mathrm{d} + (\xi + \nu) \partial_{\zeta}\mathbf{m}^\b\!\cdot\! \grad^2\mathrm{d}
					\end{pmatrix}\,,
		\end{aligned}
\end{equation}
and
\begin{equation}\nonumber
	\begin{aligned}
		  \mathcal{G}^\b \phi^\b
		= & \begin{pmatrix} 
		0\\
	\nu \left[ \p_{\pi^\alpha} \mathbf{m}^\b \Delta_{\!x}\pi^\alpha+\p^2_{\!\pi^\alpha\pi^\beta}\mathbf{m}^\b(\grad\pi^\alpha\!\cdot\!\grad\pi^\beta) \right]
		\end{pmatrix}\\
		  &+\begin{pmatrix} 
		 0\\ 
		 (\xi + \nu) \left[ \p_{\!\pi^\alpha}\mathbf{m}^\b\!\cdot\! \grad^2\pi^\alpha 
		 +(\p^2_{\!\pi^\alpha\pi^\beta}\mathbf{m}^\b\!\cdot\!\grad\pi^\alpha)\grad\pi^\beta \right]
		  	 \end{pmatrix}\,.
	\end{aligned}
\end{equation}
We will use the following notations for the simplicity: $\mathcal{F}^\b \phi^\b = (0, \mathcal{F}^\mathbf{m})^\top $ and $\mathcal{G}^\b \phi^\b = (0, \mathcal{G}^\mathbf{m})^\top $, and furthermore,
\begin{equation}\nonumber
	\begin{aligned}
	  \F^\mathbf{m}\!\cdot \grad\pi &= \F^\mathbf{m}_{\pi\pi}(\mathbf{m}^\b\!\cdot\!\grad\pi) +\F^\mathbf{m}_{\pi d}(\mathbf{m}^\b\!\cdot\!\grad \mathrm{d})\,,\\
	  \F^\mathbf{m}\!\cdot \grad\mathrm{d} &=\F^\mathbf{m}_{d\pi}(\mathbf{m}^\b\!\cdot\!\grad\pi) + \F^\mathbf{m}_{d d}(\mathbf{m}^\b\!\cdot\!\grad \mathrm{d})\,,\\
	 \G^\mathbf{m}\!\cdot \grad\pi &=\G^\mathbf{m}_{\pi\pi}(\mathbf{m}^\b\!\cdot\!\grad\pi) +\G^\mathbf{m}_{\pi d}(\mathbf{m}^\b\!\cdot\!\grad \mathrm{d})\,,\\
	\G^\mathbf{m}\!\cdot \grad\mathrm{d} &=\G^\mathbf{m}_{d\pi}(\mathbf{m}^\b\!\cdot\!\grad\pi) + \G^\mathbf{m}_{d d}(\mathbf{m}^\b\!\cdot\!\grad \mathrm{d})\,,
	\end{aligned}
\end{equation}
where $\F^\mathbf{m}_{\pi\pi},\ \F^\mathbf{m}_{\pi d},\ \F^\mathbf{m}_{d\pi},\ \F^\mathbf{m}_{d d},\ \G^\mathbf{m}_{\pi\pi},\ \G^\mathbf{m}_{\pi d},\ \G^\mathbf{m}_{d\pi},\ \G^\mathbf{m}_{d d}$ are linear functions. By substituting \eqref{ansatz-1} and \eqref{ansatz} into \eqref{eigen-equation},  now we start to solve the equation \eqref{eigen-equation} inductively.

{\bf \underline{Step 0 :}} First, the order $\sqrt{\eps}^{-1}$ in the boundary layer gives $\mathcal{A}^\mathrm{d} \phi^\b_{k,0}=0$ which implies that
\begin{equation}\nonumber
\mathbf{m}^\b_{k,0}\!\cdot\! \grad \mathrm{d}=0\quad \mbox{and}\quad\! \Psi^\b_{k,0} =0\,.
\end{equation}
In particular, $\mathbf{m}^\b_{k,0}\!\cdot\!\mathrm{n}=0$ and consequently $\mathbf{m}^\Int_{k,0}\!\cdot\!\mathrm{n}=0$ on the boundary ${\partial\Omega}$.

The order $\sqrt{\eps}^0$ of the interior part in \eqref{eigen-equation} implies that
\begin{equation}\nonumber
\mathcal{A} \phi^{\pm,\Int}_{k,0}= i \lambda^\pm_{k,0} \phi^{\pm,\Int}_{k,0}\,.
\end{equation}
Comparing with \eqref{phi-int-0} and \eqref{acoustic-mode}, we have $\phi^{\pm,\Int}_{k,0}=\phi^{\pm}_{k,0}$ and $\lambda^\pm_{k,0} = \pm \lambda_{k,0}$.

{\bf Remark:} From now on, for the simplicity of representation, we only calculate for $``+"$ case, the calculation for the $``-"$ case is the same.
\subsection{Order $\sqrt{\eps}^0$ of the boundary layer}
The order $\sqrt{\eps}^0$ in the boundary layer gives
	\begin{equation*} \label{eq:b-0-fold}
	  -\mathcal{A}^\mathrm{d} \phi^\b_{k,1}
	= (\A^\pi+\mathcal{D}^\b-i \lambda_{k,0})\phi^\b_{k,0},
	\end{equation*}
which equally means that,
\begin{multline} \label{eq:b-0-unfold}
		- \begin{pmatrix}
			\p_\zeta(\mathbf{m}^\b_{k,1} \cdot \grad \mathrm{d})\\
			 \p_\zeta\Psi^\b_{k,1} \grad \mathrm{d}
				\end{pmatrix}
		=\begin{pmatrix}
			\p_{\pi^\alpha}(\mathbf{m}^\b_{k,0}\!\cdot\! \grad \pi^\alpha)\\
			\p_{\pi^\alpha}\Psi^\b_{k,0} \grad \pi^\alpha
		\end{pmatrix}\\
	+\begin{pmatrix}
		0 \\
	\nu|\grad \mathrm{d}|^2 \partial^2_{\!\zeta\!\zeta}\mathbf{m}^\b_{k,0} + (\xi +\nu)\partial^2_{\!\zeta\!\zeta}(\mathbf{m}^\b_{k,0}\!\cdot\! \grad\mathrm{d} )\grad \mathrm{d}
	\end{pmatrix}
  -i \lambda_{k,0} \begin{pmatrix} \Psi^\b_{k,0} \\ \mathbf{m}^\b_{k,0} \end{pmatrix}\,.
\end{multline}

The process to solve this system will be illustrated in the following steps, which are the foundation to solve the more complicated ODE for  $\phi^\b_{k,j}$. The process is smilar as \cite{JM15} but with different boundary condition, for the consideration of completeness, we still write all the details. Before we solve the system \eqref{eq:b-0-unfold}, we recall that we  already know the values of $\mathbf{m}^\b_{k,0}\!\cdot\!\grad \mathrm{d}$ and $\Psi^\b_{k,0}$ as well as the boundary data for
 $\mathbf{m}^\b_{k,0}\!\cdot\!\grad \mathrm{\pi}$ and $\Psi^\b_{k,0}$.

{\bf \underline{Step 1}} is  to solve  $\Psi^\b_{k,1}$. This is achieved by taking the inner product of the second equation of \eqref{eq:b-0-unfold}
with $\grad\mathrm{d}$. This gives $\p_{\zeta}\Psi^\b_{k,1}=0$ thus $\Psi^\b_{k,1}=0$, noting that $\grad \pi^\alpha\!\cdot\!\grad \mathrm{d}=0$ and $\mathbf{m}^\b_{k,0}\!\cdot\! \grad\mathrm{d}=0$.

{\bf \underline{Step 2}} is to  solve the tangential part of $\mathbf{m}^\b_{k,0}$, i.e. $\mathbf{m}^\b_{k,0}\!\cdot\!\grad\pi$. By taking the inner product of the second equation of \eqref{eq:b-0-unfold} with $\grad \pi$, we get that $\mathbf{m}_{k,0}^\b \cdot \grad\pi$ satisfies the ODE
\begin{equation}\label{ODE-mb-0}
	\begin{cases}
	\mathcal{L} (\mathbf{m}_{k,0}^\b \cdot \nabla_x \pi) = \left( \nu |\nabla_x \mathrm{d}|^2 \partial_{\zeta \zeta}^2-i\lambda_{0} \right) (\mathbf{m}_{k,0}^\b \cdot \nabla_x \pi)=0,\\
	(\nu \partial_\zeta-\chi) (\mathbf{m}_{k,0}^\b \cdot \nabla_x \pi) |_{\zeta =0} = \chi \mathbf{m}_{k,0}^{\Int}\cdot \nabla_x \pi, \quad \text{on } \partial \Omega,\\
	(\mathbf{m}_{k,0}^\b \cdot \nabla_x \pi)|_{\zeta \to + \infty} \to 0.
	\end{cases}
\end{equation}		
The boundary condition in second line of \eqref{ODE-mb-0} should be understood as: when $\zeta = \frac{\mathrm{d}(x)}{\sqrt{\eps}} =0$, $x\in {\partial\Omega}$. The solution of \eqref{ODE-mb-0} is given by
\begin{equation} \label{sol-b-u-0}
	\begin{aligned}
       \mathbf{m}^\b_{k,0}(\pi(x),\zeta)\!\cdot\!\grad\pi
	\triangleq \widetilde{Z}^{\b,\mathrm{u}}_0(\zeta,\phi^\Int_{k,0})
	=\frac{\chi}{\chi+\nu c_\nu} (\mathbf{m}^\Int_{k,0} \cdot \grad \pi) e^{-c_\nu \zeta}\,,
	\end{aligned}
\end{equation}
where $c_\nu=\frac{1+i}{2} \sqrt{\frac{2\lambda_{k,0}}{\nu}} \frac{1}{|\grad\mathrm{d}|}$.
Note that  on the right-hand side of \eqref{sol-b-u-0}, $\mathbf{m}^\Int_{k,0}\!\cdot\!\grad\pi$ is  taken on the boundary ${\partial\Omega}$, i.e. $\zeta=0$. Furthermore, $\widetilde{Z}^{\b,\mathrm{u}}_0(\zeta, \cdot)$ is a linear function.

{\bf \underline{Step 3}} is to  solve  $\mathbf{m}_{k,1}^\b \cdot \grad \mathrm{d}$. From the first equation of \eqref{eq:b-0-unfold}, we have
\begin{align*}
	  -\p_\zeta(\mathbf{m}_{k,1}^\b \cdot \grad \mathrm{d})
	  =\mathrm{div}_\pi (\mathbf{m}_{k,0}^\b \cdot \grad \pi).
\end{align*}
By integrating from $\zeta$ to $+\infty$, we get
\begin{equation}\label{ub-1-orth}
	\begin{aligned}
	 \mathbf{m}_{k,1}^\b \cdot \grad \mathrm{d}
		\triangleq \widetilde{Z}^\b_1(\zeta,\phi^\Int_{k,0})
	=-\frac{\chi}{\chi + \nu c_\nu} \frac{1}{c_\nu}
	\mathrm{div}_\pi (\mathbf{m}_{k,0}^\Int \cdot \grad \pi) e^{-c_\nu \zeta} ,
	\end{aligned}
\end{equation}
In particular, setting $\zeta=0$ in \eqref{ub-1-orth}, we have that on the boundary ${\partial\Omega}$,
\begin{equation} \label{ubn-1}
	  -\mathbf{m}^\b_{k,1} \cdot \mathrm{n}
	\triangleq Z^\b_1(\phi^\Int_{k,0})
	= \widetilde{Z}^\b_1(0,\phi^\Int_{k,0})
	=-\frac{\chi}{\chi + \nu c_\nu} \frac{1}{c_\nu}
	\mathrm{div}_\pi(\mathbf{m}^\Int_{k,0}\!\cdot\!\grad\pi).
\end{equation}
Note that $Z^\b_1(\cdot)$ is a linear function. Similarly, in the next round, before solving $\mathbf{m}^\b_{k,1}\!\cdot\!\grad\pi$ and $\Psi^\b_{k,1}$, we need their boundary values. In other words, the terms in ansatz \eqref{ansatz} are determined by solving the ODEs from the boundary layers and operator equations from the interior {\em alternatively}.

\subsection{Order $\sqrt{\eps}$ in the interior}
The order $\sqrt{\eps}$ in  the interior part of \eqref{eigen-equation} yields:
\begin{align} \label{int-k-1}
	 \begin{cases}
	  (\A- i \lambda_{k,0})\phi^\Int_{k,1} = i \lambda_{k,1} \phi^\Int_{k,0}\,, \\
	  \mathbf{m}^\Int_{k,1}\!\cdot\!\mathrm{n} = -\mathbf{m}^\b_{k,1}\!\cdot\!\mathrm{n}
	   =Z^\b_1(\phi^\Int_{k,0})\,.
	 \end{cases}
\end{align}

Applying Lemma \ref{Solve-A} to the system \eqref{int-k-1}, from the formula \eqref{Solve-mu}, $i \lambda_{k,1}$ can be solved as
\begin{equation}\label{lambda-k-1}
	i\lambda_{k,1}=
  \int_{\partial\Omega}(\mathbf{m}^\Int_{k,1}\!\cdot\!\mathrm{n})\Psi_{k,0}\mathrm{d}\sigma_{\!x} 
  =-\int_{\partial\Omega}(\mathbf{m}^\b_{k,1}\!\cdot\!\mathrm{n})\Psi_{k,0}\mathrm{d}\sigma_{\!x}\,.
\end{equation}

From \eqref{ubn-1} and the fact that
$ \grad \Psi_{k,0}=g^{\gamma\beta}\tfrac{\p \Psi_{k,0}}{\p\pi^\beta}\tfrac{\p}{\p\pi^\gamma}$, and $\grad \pi^\alpha=g^{\alpha\delta} \tfrac{\p}{\p\pi^\delta},$
we have
\begin{equation}\nonumber
	\begin{aligned}
	\int_{\partial\Omega} \p_{\pi^\alpha}(\grad \Psi_{k,0} \cdot\grad \pi^\alpha)
			\Psi_{k,0} \, \mathrm{d}\sigma_x
	=&-\int_{\partial\Omega} g_{\gamma
			\delta}g^{\alpha\delta}g^{\beta\gamma}\tfrac{\p\Psi_{k,0}}{\p\pi^\alpha}\tfrac{\p\Psi_{k,0}}{\p\pi^\beta}\,\mathrm{d}\sigma_x\\
	=&-\int_{\partial\Omega} |\nabla_{\!\pi} \Psi_{k,0} |^2\mathrm{d}\sigma_x\,,
	\end{aligned}
\end{equation}
where $ \nabla_{\!\pi}$ is the gradient on the tangential direction of the boundary ${\partial\Omega}$. Thus the formula \eqref{lambda-k-1} reads
\begin{equation}\label{lambda-1}
  \begin{aligned}
	i\lambda^\pm_{k,1} = & \int_{\partial\Omega} Z^\b_1(\phi^\Int_{k,0})\Psi_{k,0} \d \sigma_{\!x}
	=\Lambda \int_{\partial\Omega}|\grad\Psi_{k,0}|^2 \d \sigma_{\!x}\,,
	\end{aligned}
\end{equation}
where $\Lambda =-\frac{a^2+2a+i a^2}{2(a^2+2a +2)} \sqrt\frac{\nu}{\lambda_{k,0}^3}$ with $a=\chi \sqrt\frac{2}{\nu \lambda_{k,\, 0}} |\nabla_x \mathrm{d}|$. An important property of \eqref{lambda-1} is that the real part of $i \lambda^\pm_{k,1}$ is non-positive:
\begin{equation}\label{strict-disp}
	  \mathfrak{Re}(i \lambda^\pm_{k,1}) \leq 0\,.
\end{equation}
\begin{remark}
As mentioned in \cite{DGLM99}, the strict negativity is related the famous Schiffer's conjecture. Interestingly, if in the nonisentropic case was treated as in \cite{JM15}, there would be another term in the integrand of \eqref{lambda-1} because of the additional dissipation from heat conductivity, and they have $\mathfrak{Re}(i \lambda^\pm_{k,1})< 0$, so the geometric condition related the Schiffer's conjecture as in \cite{DGLM99},also our present paper, is not needed.
\end{remark}

If the multiplicity of $i\lambda=i\lambda_{k,0}$ as the eigenvalue of $L_0=\Delta_{\!x}$ is greater than $1$, then from \eqref{compatability}, the following compatibility condition must be satisfied:
\begin{equation}\label{CC-kl0-formula}
\int_{\partial\Omega} Z^\b_1(\phi^\Int_{k,0})\Psi_{l,0}\mathrm{d}\sigma_{\!x}=0\,,\quad \mbox{for}\quad\! \lambda_{l,0}=\lambda_{k,0}\quad\mbox{with}\quad\! k \neq l\,,
\end{equation}
which reads as
\begin{equation*}\label{condition-heat}
	\Lambda \int_{\partial\Omega} |\grad\Psi_{k,0}|^2\mathrm{d}\sigma_{\!x}=0\,,
\end{equation*}
for $\lambda_{l,0}=\lambda_{k,0}$ and $ k \neq l$. We can define the quadratic form $Q_1$ and the symmetric operator $L_1$ on $\mathrm{H}_0(\lambda)$ as
\begin{equation*}\label{Q1}
  Q_1(\Psi_{k,0}, \Psi_{l.0}) = \int_{\partial\Omega} Z^\b_1(\phi^\Int_{k,0})\Psi_{l,0}\mathrm{d}\sigma_{\!x}\,,
\end{equation*}
and
\begin{equation*}\label{L1}
   L_1 \Psi_{k,0}= i \lambda_{k,1} \Psi_{k,0}\,,
\end{equation*}
which satisfies  that
\begin{equation*} \label{non-isen}
   Q_1(\Psi_{k,0},\Psi_{l,0})=\int_\Omega L_1(\Psi_{k,0})\Psi_{l,0}\,,
\end{equation*}
and the orthogonality condition \eqref{CC-kl0-formula} is
\begin{equation}\label{ortho1}
     Q_1(\Psi_{k,0},\Psi_{l,0})=0\,,\quad \mbox{if}\quad\! \Psi_{k,0}, \Psi_{l,0}\in \mathrm{H}_0(\lambda)\quad\!\mbox{and}\quad\! k\neq l\,.
\end{equation}

From Lemma \ref{Solve-A}, under these conditions, the solution to \eqref{int-k-1} is
\begin{equation*}\label{sol-int-kl-1}
\phi^\Int_{k,1} = P^\perp_0\phi^\Int_{k,1} + P_0\phi^\Int_{k,1}\,,
\end{equation*}
When $i\lambda_{k,0}$ is a simple eigenvalue of $\Delta_{\!x}$, then $P_0\phi^\Int_{k,1}\in$ Null$(\A - i \lambda_{k,\, 0})$ vanishes by \eqref{null-space}. For this case, $\phi^\Int_{k,1}=P^\perp_0\phi^\Int_{k,1} \in$ Null$(\A - i \lambda_{k,\, 0})^\perp$ is completely determined. When $i\lambda_{k,0}$ is not simple, $P_0\phi^\Int_{k,1}$ is to be determined.

Note that the system \eqref{int-k-1} is linear and the boundary data $Z^\b_1(\phi^\Int_{k,0})$ is also linear in $\phi^\Int_{k,0}$. So $P^\perp_0\phi^\Int_{k,1}$ also linearly depends on $\phi^\Int_{k,0}$. Thus we denote
\begin{equation*}\label{Z-int-1}
P^\perp_0\phi^\Int_{k,1} = Z^\Int_1(\phi^\Int_{k,0})\,,
\end{equation*}
where $Z^\Int_1(\cdot)$ is a linear function. Furthermore,
\begin{equation}\nonumber
P_0\phi^\Int_{k,1}=\sum\limits_{l\neq k, \lambda_{l,0}=\lambda_{k,0}} a_{kl,1}\phi_{l,0}\,,
\end{equation}

where
$a_{kl,1} = \langle \phi^\Int_{k,1}|\phi_{l,0}\rangle$
will be determined later. Here, we used that $a_{kk,1} = 0$.
Note that the boundary data   for
 $\mathbf{m}^\b_{k,1}\!\cdot\!\grad \mathrm{\pi}$ is determined modulo $P_0\phi^\Int_{k,1} $. Later on, we will use the notation:
\begin{equation*}
a_{kl,j}= \langle \phi^\Int_{k,j}|\phi_{l,0}\rangle\,.
\end{equation*}

\subsection{Order $\sqrt{\eps}$ of the boundary layer}
The order $\sqrt{\eps}$ in the boundary layer is
\begin{equation}\label{b-1}
   -\A^\mathrm{d} \phi^\b_{k,2} = (\A^\pi + \mathcal{D}^\b- i \lambda_{k,0}) \phi^\b_{k,1} + (\F^\b-i\lambda_{k,1}) \phi^\b_{k,0}\,.
\end{equation}

{\bf \underline{Step 1:}} As before, step 1  is to find the ODE satisfied by $\Psi^\b_{k,2}$ which is
\begin{equation}\nonumber
  -\p_\zeta\Psi^\b_{k,2}|\grad \mathrm{d}|^2= \left((\xi+2\nu)|\grad \mathrm{d}|^2\p^2_{\zeta\zeta}- i\lambda_{k,0}\right)(\mathbf{m}^\b_{k,1}\!\cdot\!\grad\mathrm{d})+ \F^{\mathbf{m}}_{d\pi}(\mathbf{m}^\b_{k,0}\!\cdot\!\grad\pi)\,.
\end{equation}
The right-hand side of the above equation is a linear operator on $\phi^\Int_{k,0}$, (noticing the notations in \eqref{sol-b-u-0} and \eqref{ub-1-orth}). Integrating from $\zeta$ to $\infty$ gives
\begin{equation*}\label{Y-2}
   \Psi^\b_{k,2}=Y^\b_2(\zeta,\phi^\Int_{k,0})\,,
\end{equation*}
where $Y^\b_2(\zeta,\cdot)$ is a linear function. Note that $\Psi^\b_{k,j} =Y^\b_j =0$ for $j=0,1$.

{\bf\underline{Step 2}} is to solve $\mathbf{m}^\b_{k,1}\!\cdot\!\grad \pi$. Using the same method which derives the ODE \eqref{ODE-mb-0}, i.e. taking the inner product of the second equation of \eqref{b-1} with $\grad \pi$, we get that $f^\b=\mathbf{m}^\b_{k,1}\!\cdot\!\grad\pi$ satisfies the ODE
\begin{align} \label{ODE-mb-1}
	\begin{cases}
	 \mathcal{L} f^\b=
	 (i\lambda_{k,1}-\F^\mathbf{m}_{\pi\pi})(\mathbf{m}^\b_{k,0}\cdot \nabla_x \pi),\\
	  \begin{aligned}
	  	(\nu \partial_{\zeta}-\chi) f^\b|_{\zeta =0}
	  	= & \nu \left[ \left( \nabla_x \mathbf{m}_{k,\, 0}^{\Int} 
	  	+\nabla_x^\top \mathbf{m}_{k,\, 0}^{\Int} \right) \cdot n \right] \cdot \nabla_x \pi\\
	  	& \qquad+\nu |\nabla_x \pi|^2 \partial_\pi (\mathbf{m}_{k,\, 0}^\b \cdot n)
	  	+\chi \mathbf{m}_{k,1}^{\Int} \cdot \nabla_x \pi,
	  	\end{aligned}\\
	  	f^\b|_{\zeta\rightarrow \infty} = 0,
	\end{cases}
\end{align}
In fact, in the boundary conditions \eqref{ODE-mb-1}$_2$, the second terms on the right-hand side vanishes in accordance with the preceding results $\mathbf{m}_{k,\, 0}^\b \cdot n = 0$ and \alert{$\mathbf{m}_{k,\, 0}^{\Int} \cdot n = 0$}. 
As a consequence, we can rewrite \eqref{ODE-mb-1}$_2$ as,
\begin{align}
   \left(\frac{\nu}{\chi} \partial_{\zeta}-1\right)f^\b |_{\zeta=0}
	= & \mathbf{m}_{k,1}^{\Int} \cdot \nabla_x \pi+\frac{\nu}{\chi} \left( \nabla_x \mathbf{m}_{k,\, 0}^{\Int} \cdot n \right) \cdot \nabla_x \pi\\  \no
	 = & \left[ (P_0 \phi^\Int_{k,1})_{\mathbf{m}} + \left( Z^\Int_1(\phi^\Int_{k,0})  \right)_{\mathbf{m}} \right] \cdot \nabla_x \pi+V_1^{\mathbf{m}} (\phi_{k,\, 0}^{\Int}),
\end{align}
where we have used the expressions
\begin{align*}
	 \mathbf{m}_{k,1}^{\Int} = (P_0\phi^\Int_{k,1})_{\mathbf{m}}+ \left( Z^\Int_1(\phi^\Int_{k,0}) \right)_{\mathbf{m}} ,
	\quad
	  V_1^{\mathbf{m}} (\phi_{k,\, 0}^{\Int})=\frac{\nu}{\chi} \left( \nabla_x \mathbf{m}_{k,\, 0}^{\Int} \cdot n \right) \cdot \nabla_x \pi ,
\end{align*}
and the super- and subscript $\mathbf{m}$ mean the $\mathbf{m}$ component of a vector. Again, the terms on the right-hand side of equation \eqref{ODE-mb-1}$_2$ are taken values on the boundary $\partial\Omega$.

Due to the linearity of the above ODE, we can decompose $\mathbf{m}^\b_{k,1}\!\cdot\!\grad \pi= f^\b_1+ f^\b_2$, where $f^b_1$ is the solution of the ODE
	\begin{align} \label{ut-1-1}
	  \begin{cases} \displaystyle
	  	\mathcal{L} f^\b_1 = 0,
	  \\[5pt] \displaystyle
	  	\left( \frac{\nu}{\chi} \partial_{\zeta} - 1 \right) f^\b_1|_{\zeta=0} = (P_0\phi^\Int_{k,1})_{\mathbf{m}}\!\cdot\!\grad\pi = \sum\limits_{l\neq k, \lambda_{l,0}=\lambda_{k,0}} a_{kl,1}\mathbf{m}^\Int_{l,0}\!\cdot\!\grad\pi\,,
	  \\[5pt] \displaystyle
	  	f^\b_1|_{\zeta\rightarrow \infty} = 0.
	  \end{cases}
	\end{align}
%
Note that this ODE is the same as \eqref{ODE-mb-0} with the linear combination of initial data, and $\widetilde{Z}^{\b,\mathbf{m}}_0(\zeta,\cdot)$ is linear, so the solution is represented as $f^\b_1 \triangleq \widetilde{Z}^{\b,\mathbf{m}}_0(\zeta,P_0\phi^\Int_{k,1})$. Besides,  $f^\b_2$ satisfies the ODE
	\begin{align} \label{ut-1-2}
	  \begin{cases} \displaystyle
	  	\mathcal{L} f^\b_2 = (i\lambda_{k,1}-\F^\mathbf{m}_{\pi\pi}) \left( \left(\widetilde{Z}^{\b,\mathbf{m}}_0(\phi^\Int_{k,0}) \right)_{\mathbf{m}} \right) ,
		\\[5pt] \displaystyle
	  	\left( \frac{\nu}{\chi} \partial_{\zeta}-1\right)f^\b_2 |_{\zeta=0}
	   = \left( Z^\Int_1(\phi^\Int_{k,0}) \right)_{\mathbf{m}}\!\cdot\!\grad\pi
	   	 +V_1^{\mathbf{m}} (\phi_{k,\, 0}^{\Int}) \,,
		\\[5pt] \displaystyle
			f^\b_2 |_{\zeta\rightarrow \infty} = 0,
	  \end{cases}
	\end{align}
%
whose solution is represented as $f^\b_2 \triangleq \widetilde{Z}^{\b,\mathbf{m}}_1(\zeta,\phi^\Int_{k,0})$, where $\widetilde{Z}^{\b,\mathbf{m}}_1(\zeta,\cdot)$ is a linear function. Thus,
\begin{equation}\nonumber
\mathbf{m}^\b_{k,1}\!\cdot\!\grad \pi = \widetilde{Z}^{\b,\mathbf{m}}_0(\zeta,P_0 \phi^\Int_{k,1})+ \widetilde{Z}^{\b,\mathbf{m}}_1(\zeta,\phi^\Int_{k,0})\,.
\end{equation}

{\bf \underline{Step 3}} is to get $\mathbf{m}^\b_{k,2}\!\cdot\!\grad\mathrm{d} \triangleq \widetilde{Z}^\b_1(\zeta,P_0\phi^\Int_{k,1}))+ \widetilde{Z}^\b_2(\zeta,\phi^\Int_{k,0})$. Then setting $\zeta=0$, we have that on the boundary ${\partial\Omega}$
\begin{equation}\label{ub-2-orth}
   -\mathbf{m}^\b_{k,2}\!\cdot\!\mathrm{n} = Z^\b_1(P_0\phi^\Int_{k,1})+ Z^\b_2(\phi^\Int_{k,0})\,.
\end{equation}

\subsection{Order $\eps$ of the interior}
The order $\eps$ of the interior part in \eqref{eigen-equation} reads
	\begin{equation}\label{int-k-2}
		\begin{cases}
			(\A- i \lambda_{k,0})\phi^\Int_{k,2} =  i \lambda_{k,1} \phi^\Int_{k,1} + (i \lambda_{k,2} - \mathcal{D}) \phi^\Int_{k,0} \,,
		\\[3pt]
		  \mathbf{m}^\Int_{k,2}\!\cdot\!\mathrm{n} = Z^\b_1(P_0\phi^\Int_{k,1})+ Z^\b_2(\phi^\Int_{k,0})\,.
		\end{cases}
	\end{equation}
Here we use again the relation $\mathbf{m}^\Int_{k,2} \cdot n =-\mathbf{m}^\b_{k,2} \cdot n $ on ${\partial\Omega}$ and \eqref{ub-2-orth}.
Taking the inner product of \eqref{int-k-2}  with $\phi^\Int_{k,0}$ gives the first solvability condition
\begin{equation*}\label{lambda-k-2}
    i\lambda_{k,2}=\int_{\partial\Omega} Z^\b_1(P_0\phi^\Int_{k,1})\Psi_{k,0}\mathrm{d}\sigma_x
    +\int_{\partial\Omega} Z^\b_2(\phi^\Int_{k,0})\Psi_{k,0}\mathrm{d}\sigma_x+\<\mathcal{D} \phi^\Int_{k,0}| \phi^\Int_{k,0} \>\,.
\end{equation*}
Because of the linearity of $Z^\b_1$ and the orthogonality condition \eqref{ortho1}, the first term in the above equation vanishes, so $i \lambda_{k,2}$ is completely determined. To solve $\phi^\Int_{k,2}$ from \eqref{int-k-2}, we consider three cases:

{\bf Case 1:} $i\lambda_{k,0}$ is a simple eigenvalue of $L_0=\Delta_{\!x}$. For this case, no orthogonality condition is needed.

{\bf Case 2:} $i\lambda_{k,1}$ is a simple eigenvalue of $L_1$. Taking  the inner product of  \eqref{int-k-2}  with $\phi^\Int_{l,0}$ gives
\begin{equation}\label{a-kl-1-perp}
    a_{kl,1}= \frac{1}{i\lambda_{k,1}- i\lambda_{l,1}} \int_{\partial\Omega} Z^\b_2(\phi^\Int_{k,0})\Psi_{l,0}\mathrm{d}\sigma_x\,.
\end{equation}
Here, we used that  $\<\mathcal{D} \phi^\Int_{k,0},\, \phi^\Int_{l,0} \>= 0 $ for $k \neq l$.  Thus $P_0\phi^\Int_{k,1}$ is completely determined. No additional conditions on H$_0(\lambda)$ rather than \eqref{ortho1} is needed.

{\bf Case 3:} $i\lambda_{k,1}$ is an eigenvalue of $L_1$ with multiplicity greater than or equal to $2$.  For this case, we need more orthogonality conditions  on
\begin{equation}\nonumber
   \mathrm{H}_1(\lambda_1)= \{ \Psi\in \mathrm{H}_0 : L_1 \Psi = i\lambda_{k,1} \Psi \}\,.
\end{equation}
These orthogonality conditions come  from the equation \eqref{int-k-2}.
 Taking  the inner product with $\phi^\Int_{l,0}$ for $l \neq k, \lambda_{l,0}=\lambda_{k,0}$ and $\lambda_{l,1}= \lambda_{k,1}$, we get
\begin{equation}\label{CC-kl1-formula}
\int_{\partial\Omega} Z^\b_2(\phi^\Int_{k,0})\Psi_{l,0}\mathrm{d}\sigma_x =0\,.
\end{equation}
We can define the quadratic form $Q_2$ and the symmetric operator $L_2$ on $\mathrm{H}_1(\lambda_1)$ as
\begin{equation*}\label{Q2}
  Q_2(\Psi_{k,0}, \Psi_{l.0}) = \int_{\partial\Omega} Z^\b_2(\phi^\Int_{k,0})\Psi_{l,0}\mathrm{d}\sigma_x + \<\mathcal{D} \phi^\Int_{k,0}| \phi^\Int_{l,0} \>\,.
\end{equation*}
and
\begin{equation}\label{L2}
   L_2 \Psi_{k,0}= i \lambda_{k,2} \Psi_{k,0}\,,
\end{equation}
which satisfy that
\begin{equation}\nonumber
   Q_2(\Psi_{k,0},\Psi_{l,0})=\int_\Omega L_2(\Psi_{k,0})\Psi_{l,0}\mathrm{d}x\,,
\end{equation}
and the orthogonality condition \eqref{CC-kl1-formula} is
\begin{equation*}\label{ortho22}
     Q_2(\Psi_{k,0},\Psi_{l,0})=0\,,\quad \mbox{if}\quad\! \Psi_{k,0}, \Psi_{l,0}\in \mathrm{H}_1(\lambda_1)\quad\!\mbox{and}\quad\! k\neq l\,.
\end{equation*}

Under these conditions, to represent the solution of the equation \eqref{int-k-2} in Null$(\A-i\lambda_{k,0})^\perp$, we decompose the equation \eqref{int-k-2} into two parts: one is the linear combination of the equation \eqref{int-k-1}, the other only includes known terms. More precisely, decompose $\phi^\Int_{k,2}= \phi^1+ \phi^2$, where $\phi^1$ satisfies the equation
\begin{equation}\label{int-k-2-1}
	\begin{cases}
	(\A-i\lambda_{k,0})\phi^1 = i \lambda_{k,1} P_0\phi^\Int_{k,1}\,,\\
	\mathbf{m}^1\!\cdot\!\mathrm{n} = Z^\b_1(P_0\phi^\Int_{k,1})\,,
	\end{cases}
\end{equation}
whose solution in Null$(\A-i\lambda_{k,0})^\perp$ is $Z^\Int_1(P_0 \phi^\Int_{k,1})$, since the equation \eqref{int-k-2-1} is just a linear combination of equation \eqref{int-k-1}. Besides $\phi^2$ satisfies the equation
\begin{equation*}\label{int-k-2-2}
  \begin{cases}
	 (\A-i\lambda_{k,0})\phi^2=(i\lambda_{k,2}-\mathcal{D})\phi^\Int_{k,0}
	 +i\lambda_{k,1} Z^\Int_1(\phi^\Int_{k,0})\,,\\
	\mathbf{m}^2\!\cdot\!\mathrm{n} = Z^\b_2(\phi^\Int_{k,0})\,,
  \end{cases}
\end{equation*}
whose solution in Null$(\A-i\lambda_{k,0})^\perp$ is {\em completely} determined, and is denoted by $Z^\Int_2(\phi^\Int_{k,0})$. In summary, the solution of the operator equation \eqref{int-k-2} is
\begin{equation*}\label{sol-int-2}
   \phi^\Int_{k,2}= P_0 \phi^\Int_{k,2} + Z^\Int_1(P_0\phi^\Int_{k,1}) + Z^\Int_2(\phi^\Int_{k,0})\,.
\end{equation*}
For {\bf \underline{case 1}}, everything is fully solved. For {\bf \underline{case 2}}, only $P_0 \phi^\Int_{k,2}$ is undetermined. While for {\bf \underline{case 3}}, $P_0\phi^\Int_{k,1}=(P_1+P^\perp_1)(\phi^\Int_{k,1})$ where $P_1$ is the orthogonal projection on $\mathrm{H}_1(\lambda_{k,1})$ i.e.
\begin{equation}\nonumber
P_1\phi^\Int_{k,1}=\sum\limits_{l\neq k, \lambda_{l,0}=\lambda_{k,0},\lambda_{l,1} =  \lambda_{k,1}  } a_{kl,1}\phi_{l,0}. \end{equation}
while $P^\perp_1$ is the orthogonal projection on $\mathrm{H}_1^\perp(\lambda_{k,1})$, i.e.
\begin{equation}\nonumber
P_1^\perp\phi^\Int_{k,1}=\sum\limits_{l\neq k, \lambda_{l,0}=\lambda_{k,0},\lambda_{l,1}\neq \lambda_{k,1}} a_{kl,1}\phi_{l,0}\,.
\end{equation}
At this stage, $P^\perp_1\phi^\Int_{k,1}$ is completely determined by \eqref{a-kl-1-perp} and $P_1 \phi^\Int_{k,1}$ will be determined later.

\subsection{Induction process for higher orders} 
\label{sub:induction_process_for_higher_order}

We can now inductively continue the process, namely go to the order $\sqrt{\eps}^{j-1}$ of the boundary layer, then the order $\sqrt{\eps}^j$ of the interior, and so on. We should do this at least till the order $N+2$ where $N$ is the precision of the error in \eqref{error}. We will not enter into the details and refer the readers to \cite{JM15}.

Roughly speaking, we can construct recursively, on each eigenspace $\mathrm{H}_0(\lambda)$ of $-\Delta_x$, a sequence of symmetric operators $\{L_q\}_{ q \in \mathbb{N} }$ in the following way: Let $L_0= - \Delta_x$, we define $L_1$ on each one of the eigenspace $\mathrm{H}_0(\lambda)$ of $L_0$ by \eqref{L1def}. Assume that the operators $L_p$ were constructed for $p \leq q-1, q \geq 2$ in such a way that each operator $L_p$ leaves invariant the eigenspaces of the operators $L_{p'}$ for $p' < p$. Now, to construct $L_q$, it is enough to construct $L_q$ on each eigenspace $\mathrm{H}_1(\lambda_1)\cap \mathrm{H}_2(\lambda_2)\cap \cdots \cap \mathrm{H}_{q-1}(\lambda_{q-1})$ which are defined in a similar way as $\mathrm{H}_0,\, \mathrm{H}_1$, where $\lambda_1, \lambda_2, \cdots, \lambda_{q-1}$ are the correspondingly {\em multiple} eigenvalues of $L_1, L_2, \cdots, L_{q-1}$, respectively. This is done by constructing a quadratic form $Q_q$ on each space $\mathrm{H}_1(\lambda_1)\cap \mathrm{H}_2(\lambda_2)\cap \cdots \cap \mathrm{H}_{q-1}(\lambda_{q-1})$ and defining $L_q$ by
 \begin{equation*}\label{orth-cond-q}
    Q_q(\Psi, \tilde{\Psi})= \int_\Omega L_q(\Psi)\tilde{\Psi}\,\mathrm{d}x\,,\quad\!\mbox{for all}\quad\! \Psi,\tilde{\Psi}\in \mathrm{H}_1(\lambda_1)\cap \mathrm{H}_2(\lambda_2)\cap \cdots \cap \mathrm{H}_{q-1}(\lambda_{q-1})\,.
 \end{equation*}
The precise construction of the quadratic form $Q_q$ is omitted here, and we mention that once there is an eigenvalue $\lambda_q$ such that $\dim\mathrm{H}_q(\lambda_q)=1,$ i.e. $\lambda_q$ is a simple eigenvalue of $L_q$, then no more additional orthogonality conditions are needed and we can just take $L_{q'} = Id $    on  $\mathrm{H}_1(\lambda_1)\cap \mathrm{H}_2(\lambda_2)\cap \cdots \cap \mathrm{H}_{q}(\lambda_{q})$  for $q' \geq q+ 1 $.

Let $N\in \mathbb{N}$ be the integer that will appear in the order of the approximation scheme. The eigenvectors $\Psi_{k,0}$ for $\lambda_{k,0}=\lambda$ should be chosen in such a way that they are orthogonal to all the operators $L_n$ for $n \leq N+2$, which means that for $\Psi_{k,0}, \Psi_{l,0}\in \mathrm{H}_1(\lambda_1) \cap \mathrm{H}_2(\lambda_2)\cap \cdots \cap \mathrm{H}_{n-1}(\lambda_{n-1})$ and $l\neq k$, we have
	\begin{equation}\label{condition-n}
	  Q_n(\Psi_{k,0},\Psi_{l,0})=\int_\Omega L_n(\Psi_{k,0})\Psi_{l,0}\,\mathrm{d}x=0\,.
	\end{equation}
In fact, the condition \eqref{condition-n} is equivalent to: for $1 \leq q \leq n$,
\begin{equation}\label{orth-cond}
Q_q(\Psi_{k,0},\Psi_{l,0})=\int_\Omega L_q(\Psi_{k,0})\Psi_{l,0}\,\mathrm{d}x=0\,,\quad \text{if}\quad\! l\neq k,\  \Psi_{k,0},\, \Psi_{l,0}\in \mathrm{H}_0(\lambda)\,.
\end{equation}

However, for a given $\lambda= \lambda_{k,0}$, we may only need to construct a small number of the operators $L_j$ if after few steps all the eigenvalues become simple, namely if for some $j$ all the eigenvalues of $L_j$ are simple on the space $\mathrm{H}_1(\lambda_1)\cap\cdots \cap \mathrm{H}_{j-1}(\lambda_{j-1})$. It is clear that if the eigenvalues become simple for some  $j \leq N+2$, then the eigenfunctions $\Psi_k$ can be uniquely determined in view of the orthogonal condition \eqref{condition-n}. If the process does not end, then we just need to satisfy the condition till the order $N+2$ which yields a non-unique choice of eigenfunctions. In this case, we set all the undetermined pieces of the eigenfunctions to be zero.

\subsection{Remainder and error estimate}
\label{sub:remainder_and_error_estimate}

The last step is the remainder and error estimate \eqref{error}. Actually, the induction process enables us to solve in the $j$-th round that the order $\sqrt \eps^{j-1}$ in the boundary part and the order $\sqrt \eps^j$ in the interior part, with all required information in the lower orders as known results. More precisely, the order $\sqrt \eps^{j-1}$ of the boundary part takes the following form
\begin{multline} \label{eq:b j-1}
	 -\A^\mathrm{d} \phi^\b_{k,j}
	 = (\A^\pi + \mathcal{D}^\b- i \lambda_{k,0}) \phi^\b_{k,j-1} \\
	 	+ (\F^\b-i\lambda_{k,1}) \phi^\b_{k,j-2}
	 	+ (\G^\b-i\lambda_{k,2}) \phi^\b_{k,j-3}
	  - i\sum^{j-1}_{h=3} \lambda_{k,h}\ \phi^\b_{k,j-1-h}\,,
\end{multline}
while the order $\sqrt \eps^j$ of the interior part reads,
\begin{align} \label{eq:int j}
	(\A- i \lambda_{k,0})\phi^\Int_{k,j}
	= i\lambda_{k,1} \phi^\Int_{k,j-1}
	 + (i \lambda_{k,2} - \mathcal{D}) \phi^\Int_{k,j-2}
   	+\sum^j_{h=3} i \lambda_{k,h}\ \phi^\Int_{k, j-h}\,.
\end{align}
As a consequence, the truncation ansatz \eqref{ansatz} gives the remainder term
\begin{multline}
	\mathcal{R}^\pm_{k,\eps,N}
	 =\left\{(\mathcal{A}^\pi+\mathcal{D}^\b)\phi^\b_{k,N} + \mathcal{F}^\b \phi^\b_{k,N-1} + \mathcal{G}^\b \phi^\b_{k,N-2}- \sum^N_{h=0}i\lambda_h \phi^\b_{k,N-h}\right\}\sqrt{\eps}^N \\
	 	+ \text{``higher order terms''} \,.
\end{multline}

Then it is an easy matter to infer from the construction of $\phi_{k,\, i}^\b$ that the error estimate for $\|R^\pm_{k,\eps,N}\|_{L^p(\Omega)}$, namely \eqref{error}. Indeed, all the boundary parts $\phi_{k,\, i}^\b$ depend linearly and inductively on the initial eigenfunction $\phi_{k,\, 0}^{\Int}$ (i.e., $\Psi_{k,\, 0}$ and $\nabla_x \Psi_{k,\, 0}$), which is determined by the Poisson equation supplemented with Neumann boundary conditions \eqref{Laplace}. Therefore, combining the standard elliptic regularity theory and a direct change of variables $\left( \pi(x) = (x_1,\, x_2),\, \zeta(x) = \frac{{\rm d}(x)}{\sqrt \eps} \right)$ together leads us to the desired remainder estimate, corresponding to the first result in \eqref{error}.

Since the leading order term of the error contribution $\phi_{k,\, \eps,\, N}^\pm - \phi_{k,\, 0}^\pm$ is $\phi_{k,\, 0}^\b$, the same argument implies the second result in \eqref{error}, which completes the whole proof of Proposition \ref{thm:boundary-layer}.
\end{proof}


\section{Strong Convergence of $Q\mathbf{u}^{\eps}$} 

We have known from earlier results \cite{LM} that only the incompressible part of the velocity $\mathcal{P} \mathbf{u}^\eps$ converges strongly to solutions of the incompressible Navier-Stokes equations while the fast oscillation part $\mathcal{Q} \mathbf{u}^\eps= \mathbf{u}^\eps-\mathcal{P} \mathbf{u}^\eps$  convegences weakly to $0$. Here we want to justify that the oscillation part will be damped instantaneously in the case of some certain Robin boundary condition, like in the Dirichlet boundary condition case. Consequently, the fact will lead us to the strong convergence of the compressible-incompressible limit result. As pointed out in \cite{DGLM99}, the oscillation part $\mathcal{Q} \mathbf{u}^\eps$ can be expressed on the orthogonal family $\left\{ \frac{\nabla \Psi_{k,\, 0}}{ i \lambda_{k,\, 0}} \right\}_{k \ge 0}$, that is
	\begin{align*}
	  \mathcal{Q} \mathbf{u}^\eps = \sum_{k \in \mathbb{N}} \skp{\mathcal{Q}\mathbf{u}^\eps}{\frac{\nabla \Psi_{k,\, 0}}{ i \lambda_{k,\, 0}}} \frac{\nabla \Psi_{k,\, 0}}{ i \lambda_{k,\, 0}}.
	\end{align*}
As in \cite{DGLM99}, we denote by $\mathcal{I} \subset \mathbb{N}$ the set of eigenmodes $\Psi_{k,\, 0}$ satisfying that $\mathfrak{Re} (i \lambda_{k,\, 1}^\pm) < 0$, $\mathcal{J}= \mathbb{N}\backslash \mathcal{I}$ in which $ i \lambda_{k,\, 1}^\pm = 0$ for $k \in \mathcal{J}$. Note that in the latter case $\mathbf{m}_{k,\, 0}^\pm = 0$ on $\partial \Omega$, hence there is no boundary layer is created.

We split $\mathcal{Q} \mathbf{u}^\eps$ into two parts as follows,
	\begin{align*}
		\mathcal{Q}_1 \mathbf{u}^\eps = \sum_{k \in \mathcal{I}} \skp{\mathcal{Q}\mathbf{u}^\eps}{\frac{\nabla \Psi_{k,\, 0}}{ i \lambda_{k,\, 0}}} \frac{\nabla \Psi_{k,\, 0}}{ i \lambda_{k,\, 0}},
	\quad \text{ and } \quad
		\mathcal{Q}_2 \mathbf{u}^\eps = \sum_{k \in \mathcal{J}} \skp{\mathcal{Q}\mathbf{u}^\eps}{\frac{\nabla \Psi_{k,\, 0}}{ i \lambda_{k,\, 0}}} \frac{\nabla \Psi_{k,\, 0}}{ i \lambda_{k,\, 0}}.
	\end{align*}

Since it had been proved in \cite{DGLM99} that $\text{curl div}(\mathcal{Q}_2 \mathbf{u}^\eps \otimes \mathcal{Q}_2 \mathbf{u}^\eps) \to 0$ in the sense of distributions when $\mathcal{J} \neq \varnothing$, we here just need to prove that the oscillation part $\mathcal{Q}_1 \mathbf{u}^\eps$ corresponding to the damped modes converges strongly to 0, more precisely,
\begin{proposition} \label{prop:convergence of Q}
  In the case of Robin boundary conditions \eqref{eq:Robin BC}, it holds that
  	\begin{align*}
  	  \mathcal{Q}_1 \mathbf{u}^\eps \to 0 \  \text{ in } L^2(0,\, T; L^2(\Omega)), \quad \text{as } \eps \to 0.
  	\end{align*}
\end{proposition}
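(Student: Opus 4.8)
The plan is to follow the scheme of \cite{DGLM99}: decompose $\mathcal{Q}\mathbf{u}^\eps$ along the boundary-layer-corrected acoustic modes $\phi^\pm_{k,\eps,N}$ furnished by Proposition \ref{thm:boundary-layer}, show that the corresponding amplitudes satisfy a scalar linear ODE whose drift is exponentially damped on the time scale $\sqrt\eps$ when $k\in\mathcal{I}$, and control the forcing terms so that Duhamel's formula combined with Young's convolution inequality forces those amplitudes to $0$ in $L^2(0,T)$.

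First I would reduce to finitely many modes. Since $\{\Psi_{k,0}\}_{k\ge1}$ is an orthonormal basis of the mean-zero subspace of $L^2(\Omega)$ and $\mathbf{u}^\eps\cdot\mathrm{n}=0$, one has $\langle\mathcal{Q}\mathbf{u}^\eps,\tfrac{\grad\Psi_{k,0}}{i\lambda_{k,0}}\rangle=-\tfrac{1}{i\lambda_{k,0}}\langle\mathrm{div}\,\mathbf{u}^\eps,\Psi_{k,0}\rangle$, so by Parseval $\sum_{k>K}\bigl|\langle\mathcal{Q}\mathbf{u}^\eps,\tfrac{\grad\Psi_{k,0}}{i\lambda_{k,0}}\rangle\bigr|^2\le\lambda_{K,0}^{-2}\,\|\mathrm{div}\,\mathbf{u}^\eps\|_{L^2(\Omega)}^2$; integrating in time and using the uniform bound of $\mathbf{u}^\eps$ in $L^2(0,T;H^1(\Omega))$ makes the tail of the series defining $\mathcal{Q}_1\mathbf{u}^\eps$ bounded by $C\lambda_{K,0}^{-2}\to0$ as $K\to\infty$, uniformly in $\eps$. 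Hence it suffices to prove that for each fixed $k\in\mathcal{I}$ the amplitude $\langle\mathcal{Q}\mathbf{u}^\eps,\tfrac{\grad\Psi_{k,0}}{i\lambda_{k,0}}\rangle\to0$ in $L^2(0,T)$.

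Next, fix $k\in\mathcal{I}$ and a sign, fix $N$ large (say $N\ge3$), and test the dual formulation \eqref{eq:short dual form} against $\phi=\phi^\pm_{k,\eps,N}$, which is an admissible test function because it obeys the dual Navier-slip conditions \eqref{Robin-BC-dual} up to an $O(\sqrt\eps^{\,N+1})$ boundary error coming from truncating $(\mathrm{BC}_i)$. Using $\mathcal{A}_\eps\phi^\pm_{k,\eps,N}=i\lambda^\pm_{k,\eps,N}\phi^\pm_{k,\eps,N}+\mathcal{R}^\pm_{k,\eps,N}$ and setting $b^\pm_k(t)=\langle\phi^\eps(t),\phi^\pm_{k,\eps,N}\rangle$ (which is bounded at $t=0$ uniformly in $\eps$ by \eqref{initialbound}), one obtains
\begin{equation*}
  \tfrac{\mathrm{d}}{\mathrm{d}t}b^\pm_k=\mu^\pm_{k,\eps}\,b^\pm_k+\tfrac1\eps\langle\phi^\eps,\mathcal{R}^\pm_{k,\eps,N}\rangle+c^\eps(\mathbf{m}^\pm_{k,\eps,N}),\qquad \mathfrak{Re}\,\mu^\pm_{k,\eps}=\tfrac{\mathfrak{Re}(i\lambda^\pm_{k,1})}{\sqrt\eps}+O(1).
\end{equation*}
Since $k\in\mathcal{I}$ means $\mathfrak{Re}(i\lambda^\pm_{k,1})<0$, we get $\mathfrak{Re}\,\mu^\pm_{k,\eps}\le-c_k/\sqrt\eps$ for $\eps$ small. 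It remains to bound the two forcing terms in $L^2(0,T)$. The error term is handled by \eqref{error} with $p=\infty$ and the uniform bound of $\mathbf{m}^\eps$ in $L^\infty(0,T;L^1(\Omega))$, giving $\|\tfrac1\eps\langle\phi^\eps,\mathcal{R}^\pm_{k,\eps,N}\rangle\|_{L^2(0,T)}\le C(\sqrt\eps)^{\,N-2}$. For $c^\eps(\mathbf{m}^\pm_{k,\eps,N})=-\eps\langle\Psi^\eps\mathbf{u}^\eps,\mathrm{div}\,\Sigma(\mathbf{m}^\pm_{k,\eps,N})\rangle+\langle\mathbf{m}^\eps\otimes\mathbf{u}^\eps,\grad\mathbf{m}^\pm_{k,\eps,N}\rangle+(\gamma-1)\langle\pi^\eps,\mathrm{div}\,\mathbf{m}^\pm_{k,\eps,N}\rangle$ I would use the structure of the layer (concentrated in a strip of width $\sqrt\eps$, losing one power of $\sqrt\eps$ per normal derivative) together with the standard uniform estimates on $\Psi^\eps$, $\pi^\eps$ and $\sqrt{\rho^\eps}\mathbf{u}^\eps$: the first term is tamed by its explicit $\eps$ against $\|\mathrm{div}\,\Sigma(\mathbf{m}^\pm_{k,\eps,N})\|_{L^3}\lesssim\eps^{-5/6}$ after pairing with $\Psi^\eps\mathbf{u}^\eps\in L^2(0,T;L^{3/2})$; the transport term is bounded by $\|\mathbf{u}^\eps\otimes\mathbf{u}^\eps\|_{L^2(0,T;L^{3/2})}\,\|\grad\mathbf{m}^\pm_{k,\eps,N}\|_{L^3}\lesssim\eps^{-1/3}$; and the pressure term is merely $O(1)$ because the crucial identity $\mathbf{m}^\b_{k,0}\!\cdot\!\grad\mathrm{d}=0$ from Step~0 removes the would-be $\eps^{-1/2}$ contribution to $\mathrm{div}\,\mathbf{m}^\pm_{k,\eps,N}$, so $\|\mathrm{div}\,\mathbf{m}^\pm_{k,\eps,N}\|_{L^\infty}\le C$ and this term is $\le C\|\pi^\eps\|_{L^1}$. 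Altogether $\|c^\eps(\mathbf{m}^\pm_{k,\eps,N})\|_{L^2(0,T)}\le C\eps^{-1/3}$.

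Finally, Duhamel's formula gives $b^\pm_k(t)=e^{\mu^\pm_{k,\eps}t}b^\pm_k(0)+\int_0^t e^{\mu^\pm_{k,\eps}(t-s)}F^\pm_k(s)\,\mathrm{d}s$; since $|e^{\mu^\pm_{k,\eps}t}|\le e^{-c_kt/\sqrt\eps}$ has $L^1(0,\infty)$-norm $\lesssim\sqrt\eps$ and $L^2(0,\infty)$-norm $\lesssim\eps^{1/4}$, Young's inequality yields $\|b^\pm_k\|_{L^2(0,T)}\le C\eps^{1/4}+C\sqrt\eps\cdot\eps^{-1/3}=C\eps^{1/6}\to0$. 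To return to $\mathcal{Q}\mathbf{u}^\eps$ I would use that $\phi^\pm_{k,\eps,N}=\phi^\pm_{k,0}+O(\eps^{1/4})$ in $L^2$ (and much better in the $\Psi$-component, where the layer only starts at order $\eps$) and that $\{\phi^+_{k,0},\phi^-_{k,0}\}$ spans the same complex $2$-plane as $\{(\Psi_{k,0},0),(0,\tfrac{\grad\Psi_{k,0}}{i\lambda_{k,0}})\}$; combining this with $\mathbf{m}^\eps=\mathbf{u}^\eps+\eps\Psi^\eps\mathbf{u}^\eps$ and $\langle\mathbf{u}^\eps,\grad\Psi_{k,0}\rangle=\langle\mathcal{Q}\mathbf{u}^\eps,\grad\Psi_{k,0}\rangle$ expresses $\langle\mathcal{Q}\mathbf{u}^\eps,\tfrac{\grad\Psi_{k,0}}{i\lambda_{k,0}}\rangle$ as a fixed linear combination of $b^+_k,b^-_k$ up to errors that are $O(\eps^{1/4})$ and $O(\eps)$ in $L^2(0,T)$. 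Summing over the finitely many $k\in\mathcal{I}$ with $k\le K$ and invoking the uniform tail estimate of the first step then completes the proof. The main obstacle is precisely the bound on $c^\eps(\mathbf{m}^\pm_{k,\eps,N})$: the boundary-layer profile loses $\sqrt\eps$ per normal derivative, so the viscous and pressure contributions look out of reach, and the argument goes through only because the $\sqrt\eps$-thin support of the layer recovers those powers in $L^p$ for $p<\infty$, because the first term of $c^\eps$ carries an explicit factor $\eps$, and because $\mathbf{m}^\b_{k,0}\!\cdot\!\grad\mathrm{d}=0$ keeps $\mathrm{div}\,\mathbf{m}^\pm_{k,\eps,N}$ bounded in $L^\infty$.
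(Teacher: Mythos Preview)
Your overall scheme is the same as the paper's: reduce to finitely many modes via the high-frequency tail estimate, test the dual formulation against the approximate eigenmodes $\phi^\pm_{k,\eps,N}$, derive a scalar ODE for $b^\pm_k(t)=\langle\phi^\eps,\phi^\pm_{k,\eps,N}\rangle$ with drift $\mu^\pm_{k,\eps}$ whose real part is $\sim -c_k/\sqrt\eps$, and close by Duhamel plus the $L^1$--$L^2$ smallness of $e^{\mu^\pm_{k,\eps}t}$. The paper does exactly this (with $N=2$ rather than your $N\ge3$, which is immaterial), and then recovers $\langle\mathcal{Q}\mathbf{u}^\eps,\mathbf{m}^\pm_{k,0}\rangle$ from $b^\pm_k$ just as you outline.

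There is one genuine soft spot in your estimate of the forcing, namely the transport term $\langle\mathbf{m}^\eps\!\otimes\!\mathbf{u}^\eps,\grad\mathbf{m}^\pm_{k,\eps,N}\rangle$. You bound it by $\|\mathbf{u}^\eps\!\otimes\!\mathbf{u}^\eps\|_{L^2_tL^{3/2}_x}\,\|\grad\mathbf{m}^\pm_{k,\eps,N}\|_{L^3_x}$, but (i) the actual bilinear is $\mathbf{m}^\eps\!\otimes\!\mathbf{u}^\eps$, not $\mathbf{u}^\eps\!\otimes\!\mathbf{u}^\eps$, and (ii) to place either one in $L^2_tL^{3/2}_x$ you need $\mathbf{u}^\eps\in L^4_tL^3_x$, which by interpolation would require $\mathbf{u}^\eps\in L^\infty_tL^2_x$ --- a bound the energy inequality does \emph{not} give for weak solutions of the compressible system (one only has $\sqrt{\rho^\eps}\mathbf{u}^\eps\in L^\infty_tL^2_x$). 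The paper circumvents this by writing $\mathbf{m}^\eps=\mathbf{u}^\eps+\eps\Psi^\eps\mathbf{u}^\eps$, integrating by parts so that the test function $\mathbf{m}^\pm_{k,\eps,N}$ is hit only in $L^\infty$ (no $\eps^{-1/2}$ loss), and invoking the DGLM99 splitting $\mathbf{u}^\eps=\mathbf{u}^\eps_1+\mathbf{u}^\eps_2$ to cope with the missing $L^\infty_tL^2_x$ bound. Your other forcing estimates (the viscous term with its explicit $\eps$, the pressure term via $\mathbf{m}^\b_{k,0}\!\cdot\!\grad\mathrm{d}=0$ keeping $\mathrm{div}\,\mathbf{m}^\pm_{k,\eps,N}$ bounded, and the remainder via \eqref{error}) are in line with the paper; only the transport term needs the more careful treatment above.
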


\begin{proof}
First, we reduce the summation with infinitely many terms to a finite number of modes, by observing the facts $\lambda_N \to \infty$ as $N \to \infty$ and
	\begin{align}
	  \sum_{k > N} \int_0^T \abs{ \skp{\mathcal{Q}_1 \mathbf{u}^\eps}{ \frac{\nabla \Psi_{k,\, 0}}{i \lambda_{k,\, 0}} } }^2 \d t \leq \frac{C}{\lambda_{N+1}^2} \abss{\nabla \mathbf{u}^\eps}_{L^2(0,\, T; L^2(\Omega))}.
	\end{align}
So it suffices to prove the that for any fixed $k$, $\skpt{\mathcal{Q}_1 \mathbf{u}^\eps}{\mathbf{m}_{k,\, 0}^\pm} \to 0$ in $L^2(0,\, T)$.

Notice the identity $\mathcal{Q} \mathbf{u}^\eps = \mathcal{Q} \mathbf{m}^\eps - \eps \mathcal{Q} (\Psi^\eps \mathbf{u}^\eps)$ and the estimate
	\begin{align}
	  \eps \abs{ \skp{ \mathcal{Q} (\Psi^\eps \mathbf{u}^\eps) }{ \nabla_x \Psi_{k,\, 0} } }
	\leq \eps \abss{\Psi^\eps}_{L^\gamma(\Omega)} \abss{\mathbf{u}^\eps}_{L^\frac{\gamma}{\gamma-1}(\Omega)}
					 \abss{\nabla_x \Psi_{k,\, 0}}_{L^\infty(\Omega)} \to 0, \quad \text{ in } L^2(0,\, T),
	\end{align}
by virtue of the relation $\gamma > \frac{d}{2}$ so that $\abss{\mathbf{u}^\eps}_{L^\frac{\gamma}{\gamma-1}(\Omega)} \leq \abss{\nabla_x \mathbf{u}^\eps}_{L^2(\Omega)}$ via the Sobolev embedding, then we only need to estimate study the quantity $\skpt{\mathcal{Q} \mathbf{m}^\eps}{\mathbf{m}_{k,\, 0}^\pm}$.

Let $\beta_{k,\, \eps}^{\pm}(t)= \skpt{\phi^\eps(t)}{\phi_{k,\, 0}^\pm}$, it is a easy matter to check
	\begin{align*}
	  2 \skp{\mathcal{Q} \mathbf{m}^\eps}{\mathbf{m}_{k,\, 0}^\pm} = \beta_{k,\, \eps}^\pm - \beta_{k,\, 0}^\mp.
	\end{align*}
So we are led to prove that
	\begin{align*}
	  b_{k,\, \eps}^{\pm} (t) = \skp{\phi^\eps(t)}{\phi_{k,\, \eps,\, 2}^{\pm}}  \to 0 \quad \text{ in } L^2(0,\, T),
	\end{align*}
due to Proposition \ref{thm:boundary-layer} applied with $N=2$,
	\begin{align}
	  \abs{ \skp{\phi^\eps(t)}{\phi_{k,\, 0}^\pm - \phi_{k,\, \eps,\, 2}^{\pm}} }
	\leq & C \sqrt\eps^{1- \frac{1}{\kappa}} \abss{\Psi^\eps}_{L^\infty(0,\, T; L^\kappa(\Omega))}
					+ C \sqrt\eps^{\frac{1}{2} - \frac{1}{2\gamma}} \abss{\mathbf{m}^\eps}_{L^\infty(0,\, T; L^\frac{2\gamma}{\gamma+1}(\Omega))}
	\no\\[2pt]
	\leq & C \sqrt\eps^\alpha \left\{ \abss{\Psi^\eps}_{L^\infty(0,\, T; L^\kappa(\Omega))} + \abss{\mathbf{m}^\eps}_{L^\infty(0,\, T; L^\frac{2\gamma}{\gamma+1}(\Omega))} \right\},
	\end{align}
here we define $\alpha = \inf(1-\frac{1}{\kappa},\, \frac{1}{2} - \frac{1}{2\gamma})$.
\end{proof}
We now state the following proposition.
\begin{proposition} \label{lemm:convergence of b-eps}
  $b_{k,\, \eps}^{\pm} (t) \to 0$ in $L^2(0,\, T)$.
\end{proposition}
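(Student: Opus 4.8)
The plan is to derive an ordinary differential inequality for $b_{k,\eps}^\pm(t) = \langle \phi^\eps(t) \mid \phi_{k,\eps,2}^\pm\rangle$ directly from the dual formulation \eqref{eq:short dual form}, exploiting that $\phi_{k,\eps,2}^\pm$ is an approximate eigenfunction of $\mathcal{A}_\eps$ with eigenvalue $i\lambda_{k,\eps,2}^\pm$ whose expansion $i\lambda_{k,\eps,2}^\pm = \pm i\lambda_{k,0} + \sqrt\eps\, i\lambda_{k,1}^\pm + O(\eps)$ has $\mathfrak{Re}(i\lambda_{k,1}^\pm) < 0$ for $k \in \mathcal{I}$ by the strict dissipativity \eqref{strict-disp} (recall $k \in \mathcal{I}$ was precisely the set where the real part is strictly negative). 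First I would apply \eqref{eq:short dual form} with the fixed test function $\phi = \phi_{k,\eps,2}^\pm$ (which satisfies the dual Navier-slip boundary conditions \eqref{Robin-BC-dual} by construction of the ansatz \eqref{eq:BC$_i$}), obtaining
\begin{equation*}
  \frac{\mathrm d}{\mathrm d t} b_{k,\eps}^\pm(t) - \frac{1}{\eps}\langle \phi^\eps \mid \mathcal{A}_\eps \phi_{k,\eps,2}^\pm\rangle = c^\eps(\mathbf{m}_{k,\eps,2}^\pm)\,.
\end{equation*}
Substituting $\mathcal{A}_\eps \phi_{k,\eps,2}^\pm = i\lambda_{k,\eps,2}^\pm \phi_{k,\eps,2}^\pm + \mathcal{R}_{k,\eps,2}^\pm$ turns the second term into $\frac{i\lambda_{k,\eps,2}^\pm}{\eps} b_{k,\eps}^\pm + \frac{1}{\eps}\langle\phi^\eps \mid \mathcal{R}_{k,\eps,2}^\pm\rangle$, so that
\begin{equation*}
  \frac{\mathrm d}{\mathrm d t} b_{k,\eps}^\pm - \frac{i\lambda_{k,\eps,2}^\pm}{\eps} b_{k,\eps}^\pm = c^\eps(\mathbf{m}_{k,\eps,2}^\pm) + \frac{1}{\eps}\langle\phi^\eps \mid \mathcal{R}_{k,\eps,2}^\pm\rangle\,.
\end{equation*}

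Next I would bound the right-hand side in $L^1(0,T)$ uniformly in $\eps$. For the remainder term, Proposition \ref{thm:boundary-layer} with $N=2$ gives $\|\mathcal{R}_{k,\eps,2}^\pm\|_{L^p(\Omega)} \leq C_p (\sqrt\eps)^{2 + 1/p}$; pairing against $\Psi^\eps \in L^\infty(0,T;L^\kappa)$ and $\mathbf{m}^\eps \in L^\infty(0,T;L^{2\gamma/(\gamma+1)})$ (both bounded uniformly in $\eps$ by the energy inequality), one checks $\eps^{-1}\|\langle\phi^\eps\mid\mathcal{R}_{k,\eps,2}^\pm\rangle\|_{L^1(0,T)} \leq C\sqrt\eps^{\,\alpha'}$ for some $\alpha'>0$, using $\sqrt\eps^{-2}\cdot\sqrt\eps^{2+1/p} = \sqrt\eps^{1/p}$ with $p$ the Hölder conjugate exponent. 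For $c^\eps(\mathbf{m}_{k,\eps,2}^\pm)$, each of the three pieces is handled by the same uniform energy bounds exactly as the analogous term $c^\eps$ is estimated in \cite{DGLM99,LM}: $\eps\langle\Psi^\eps\mathbf{u}^\eps\mid\mathrm{div}\,\Sigma(\mathbf{m}_{k,\eps,2}^\pm)\rangle$ is $O(\eps)$ small because $\mathbf{m}_{k,\eps,2}^\pm$ and its derivatives are controlled (with an $O(\eps^{-1})$ loss from the boundary-layer scaling $\zeta = \mathrm d(x)/\sqrt\eps$ absorbed into a net positive power of $\sqrt\eps$), while $\langle\mathbf{m}^\eps\otimes\mathbf{u}^\eps\mid\nabla_x\mathbf{m}_{k,\eps,2}^\pm\rangle$ and $(\gamma-1)\langle\pi^\eps\mid\mathrm{div}\,\mathbf{m}_{k,\eps,2}^\pm\rangle$ are bounded in $L^1(0,T)$ uniformly in $\eps$ (the latter using $\pi^\eps \in L^\infty(0,T;L^1)$ from the energy inequality). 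Denote the full right-hand side by $g_{k,\eps}^\pm(t)$, so $\|g_{k,\eps}^\pm\|_{L^1(0,T)} \leq C$ uniformly, and in fact the remainder contribution is $o(1)$.

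Finally I would integrate the linear ODE via Duhamel. Writing $\mu_\eps = i\lambda_{k,\eps,2}^\pm/\eps$, we have $\mathfrak{Re}(\mu_\eps) = \frac{1}{\sqrt\eps}\mathfrak{Re}(i\lambda_{k,1}^\pm) + O(1) \to -\infty$ since $k\in\mathcal{I}$ forces $\mathfrak{Re}(i\lambda_{k,1}^\pm)<0$; in particular $\mathfrak{Re}(\mu_\eps) \leq -\delta/\sqrt\eps$ for small $\eps$ and some $\delta>0$. Then
\begin{equation*}
  b_{k,\eps}^\pm(t) = e^{\mu_\eps t} b_{k,\eps}^\pm(0) + \int_0^t e^{\mu_\eps(t-s)} g_{k,\eps}^\pm(s)\,\mathrm d s\,.
\end{equation*}
Taking $L^2(0,T)$ norms: the first term satisfies $\|e^{\mu_\eps t} b_{k,\eps}^\pm(0)\|_{L^2(0,T)} \leq |b_{k,\eps}^\pm(0)|\,(2|\mathfrak{Re}\mu_\eps|)^{-1/2} \leq C\sqrt\eps^{1/4} \to 0$, using that $b_{k,\eps}^\pm(0)$ is bounded (via $\|\phi_{k,\eps,2}^\pm - \phi_{k,0}^\pm\|_{L^p}\to0$ from \eqref{error} and the uniform bound on $\phi^\eps(0)$). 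For the convolution term, by Young's inequality $\|e^{\mu_\eps\cdot}\ast g_{k,\eps}^\pm\|_{L^2(0,T)} \leq \|e^{\mathfrak{Re}(\mu_\eps)\cdot}\|_{L^2(0,\infty)}\|g_{k,\eps}^\pm\|_{L^1(0,T)} \leq (2|\mathfrak{Re}\mu_\eps|)^{-1/2}\cdot C \leq C\sqrt\eps^{1/4} \to 0$. This gives $b_{k,\eps}^\pm \to 0$ in $L^2(0,T)$, as desired. The main obstacle, and the step deserving most care, is the estimation of $c^\eps(\mathbf{m}_{k,\eps,2}^\pm)$ and the remainder pairing: one must track precisely how the boundary-layer scaling $\zeta = \mathrm d(x)/\sqrt\eps$ distributes powers of $\sqrt\eps$ when derivatives hit $\phi_{k,\eps,2}^\pm$, and confirm that after combining with the explicit $\eps$ prefactors and the weak-solution integrability exponents ($\gamma > d/2$, Sobolev embeddings), every term carries a strictly positive net power of $\sqrt\eps$ or at worst is $L^1(0,T)$-bounded so that the decaying exponential kernel $(2|\mathfrak{Re}\mu_\eps|)^{-1/2}\sim\sqrt\eps^{1/4}$ wins. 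This is exactly the bookkeeping already carried out for the Dirichlet case in \cite{DGLM99}, now adapted to the Navier-slip test function.
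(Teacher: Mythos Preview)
Your proposal is correct and follows essentially the same approach as the paper: derive the linear ODE for $b_{k,\eps}^\pm$ from the dual formulation using the approximate eigenfunction property, solve by Duhamel, and exploit $\mathfrak{Re}(i\lambda_{k,1}^\pm)<0$ for $k\in\mathcal{I}$ to get exponential decay of the kernel at rate $\sim\eps^{-1/2}$. Two minor remarks: (i) since the pairing is sesquilinear in the second slot, the coefficient in the ODE is $\overline{i\lambda_{k,\eps,2}^\pm}/\eps$ rather than $i\lambda_{k,\eps,2}^\pm/\eps$, though this does not affect the real part and hence your decay estimates stand; (ii) the paper handles the convolution term via a pointwise H\"older estimate $|\int_0^t a(s)K(t-s)\,\mathrm{d}s|\le \|a\|_{L^p(0,t)}\eps^{1/(2r)}$ applied separately to each piece $c_1,\dots,c_4$ (with different $p$), whereas you use Young's inequality $\|K*g\|_{L^2}\le\|K\|_{L^2}\|g\|_{L^1}$ --- both give the same $\eps^{1/4}$ rate, and your version is a bit cleaner once the uniform $L^1(0,T)$ bound on the source is established (which, as you note, requires the same careful bookkeeping of boundary-layer derivatives as in \cite{DGLM99}).
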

\begin{proof}
Take $\phi = \phi_{k,\, \eps,\, 2}^{\pm}$ as test function in the dual system \eqref{eq:short dual form}, we then get directly
	\begin{align*}
	  \frac{\rm d}{{\rm d} t} \skp{\phi^\eps}{\phi_{k,\, \eps,\, 2}^{\pm}} - \frac{1}{\eps} \skp{\phi^\eps}{\mathcal{A}_\eps \phi_{k,\, \eps,\, 2}^{\pm}} = c^\eps(\mathbf{m}_{k,\, \eps,\, 2}^{\pm}),
	\end{align*}
combining with Proposition \ref{thm:boundary-layer}, this implies that
	\begin{align}\label{eq:ODE for b}
	  \frac{\rm d}{{\rm d} t} b_{k,\, \eps}^{\pm}(t) - \frac{\overline{i \lambda_{k,\, \eps,\, 2}^{\pm}}}{\eps} b_{k,\, \eps}^{\pm}(t) = c_{k,\, \eps}^{\pm}(t),
	\end{align}
with $c_{k,\, \eps}^{\pm} \triangleq c^\eps(\mathbf{m}_{k,\, \eps,\, 2}^{\pm}) + \eps^{-1} \skpt{\phi^\eps}{\mathcal{R}_{k,\, \eps,\, 2}^{\pm}} $.

Solving the above ODE system, we get the expression
	\begin{align}\label{eq:solu-to-b}
	  b_{k,\, \eps}^{\pm}(t) = b_{k,\, \eps}^{\pm}(0) e^{\frac{\overline{i \lambda_{k,\, \eps,\, 2}^{\pm}}}{\eps} t} + \int_0^t c_{k,\, \eps}^{\pm}(s) e^{\frac{\overline{i \lambda_{k,\, \eps,\, 2}^{\pm}}}{\eps} (t-s)} \d s.
	\end{align}
We split the following estimates into two steps.

\smallskip\noindent
\textbf{Step 1}. \textit{Estimation of the initial part}.

Recalling Proposition \ref{thm:boundary-layer}, we write
	\begin{align*}
	  i \lambda_{k,\, \eps,\, 2}^{\pm} = \pm i \lambda_{k,\, 0} + i \lambda_{k,\, 1}^\pm \sqrt\eps + i \widetilde\lambda_{k,\, 2}^\pm \eps, \text{ for some } i \widetilde\lambda_{k,\, 2}^\pm = \mathcal{O}(1),
	\end{align*}
then the expansion
	\begin{align}
	  \frac{i \lambda_{k,\, \eps,\, 2}^{\pm}}{\eps}
	 = \frac{1}{\sqrt\eps} \left[ \mathfrak{Re}(i \lambda_{k,\, 1}^\pm) +  \sqrt\eps \mathfrak{Re}(i \widetilde\lambda_{k,\, 2}^\pm) \right]
	 		- i \left[ \pm \frac{1}{\eps} \lambda_{k,\, 0} + \frac{1}{\sqrt\eps} {\rm Im}(i \lambda_{k,\, 1}^\pm) + {\rm Im}(i \widetilde\lambda_{k,\, 2}^\pm) \right]
	\end{align}
leads us to get
\begin{align}
	  &\abss{b_{k,\, \eps}^{\pm}(0) e^{\frac{\overline{i \lambda_{k,\, \eps,\, 2}^{\pm}}}{\eps} t}}_{L^2(0,\, T)}
	\\\no
	=	&\abs{b_{k,\, \eps}^{\pm}(0)} \left\{ -2 \mathfrak{Re}(i \lambda_{k,\, 1}^\pm) +  \sqrt\eps \mathfrak{Re}(i \widetilde\lambda_{k,\, 2}^\pm)  \right\}^{-\frac{1}{2}} \left\{ 1- e^{\frac{2}{\sqrt\eps} [\mathfrak{Re}(i \lambda_{k,\, 1}^\pm) +  \sqrt\eps \mathfrak{Re}(i \widetilde\lambda_{k,\, 2}^\pm)] T} \right\}^{\frac{1}{2}} \eps^{\frac{1}{4}}.
\end{align}
On the other hand,
\begin{align}
	  b_{k,\, \eps}^{\pm}(0) = & \skp{\phi^\eps}{\phi_{k,\, \eps,\, 2}^{\pm}}\\ \no
	= & \skp{\phi^\eps(0)}{\phi_{k,\, 0}^{\pm}(0)}
	+ \skp{\phi^\eps(0)}{\phi_{k,\, 0}^{\pm}(0) - \phi_{k,\, \eps,\, 2}^{\pm}(0)}.
\end{align}

Notice that the first term is bounded by some norms of $\phi^\eps(0)$, indeed,
\begin{align}
	\skp{\phi^\eps(0)}{\phi_{k,\, 0}^{\pm}(0)}
	=& \int_\Omega \Psi^{\eps,\, in} \overline{\Psi_{k,\, 0}^\pm} \d x
   +\int_\Omega \mathbf{m}^{\eps,\, in}
   \overline{\mathbf{m}_{k,\, 0}^\pm}\d x\\  \no
	\leq & \abss{\Psi^{\eps,\, in}}_{L^\gamma(\Omega)} 
	\abss{\Psi_{k,\, 0}^\pm}_{L^\frac{\gamma}{\gamma-1}(\Omega)}
 +\abss{\mathbf{m}^{\eps,\, in}}_{L^\frac{2\gamma}{\gamma+1}(\Omega)} \abss{\mathbf{m}_{k,\, 0}^\pm}_{L^\frac{2\gamma}{\gamma-1}(\Omega)}\\ \no
\leq & C \left( \abss{\Psi^{\eps,\, in}}_{L^\gamma(\Omega)}+ \abss{\mathbf{m}^{\eps,\, in}}_{L^\frac{2\gamma}{\gamma+1}(\Omega)} \right)
\end{align}

The estimation of the second term comes directly from Proposition \ref{thm:boundary-layer}, that is
\begin{align}
	  \abs{\skp{\phi^\eps(0)}{\phi_{k,\, 0}^{\pm}(0) - \phi_{k,\, \eps,\, 2}^{\pm}(0)}}
	\leq & C \sqrt\eps^{\frac{\gamma-1}{\gamma}}
	 \abss{\Psi^{\eps,\, in}}_{L^\gamma(\Omega)}
	+\sqrt\eps^{\frac{1}{2} - \frac{1}{2\gamma}} 
\abss{\mathbf{m}^{\eps,\, in}}_{L^\frac{2\gamma}{\gamma+1}(\Omega)}\\ \no
	\leq & C \eps^{\frac{\gamma-1}{2\gamma}} \left( \abss{\Psi^{\eps,\, in}}_{L^\gamma(\Omega)} + \abss{\mathbf{m}^{\eps,\, in}}_{L^\frac{2\gamma}{\gamma+1}(\Omega)} \right).
\end{align}

The above two estimations together yield the bound of $\abs{b_{k,\, \eps}^{\pm}(0)}$, and also the bound of the initial part, i.e.
\begin{align}\label{eq:esm-b-ini}
	 \abss{b_{k,\, \eps}^{\pm}(0) 
 e^{\frac{\overline{i\lambda_{k,\, \eps,\, 2}^{\pm}}}{\eps} t}}_{L^2(0,\, T)}\leq C \eps^{\frac{1}{4}}.
\end{align}

\smallskip\noindent
\textbf{Step 2}. \textit{Estimation of the remaining term}.

By the H\"older inequality, we get, for any $a(s) \in L^p(0,\, t)$, and $1\le p,\, r \le \infty$ satisfying $\frac{1}{p} + \frac{1}{r} = 1$, that
 \begin{align} \label{eq:Holder inequ for c}
   \abs{\int_0^t a(s) e^{\frac{\overline{i \lambda_{k,\, \eps,\, 2}^{\pm}}}{\eps} (t-s)} \d s}
 \le C \int_0^t a(s) e^{- \frac{1}{\sqrt\eps} \mathfrak{Re}(i \lambda_{k,\, 1}^\pm) (s-t)} \d s
 \leq C \abss{s}_{L^p(0,\, t)} \eps^{\frac{1}{2r}},
 \end{align}
where we have used the inequality,
  \begin{align*}
    \abss{e^{- \frac{1}{\sqrt\eps} \mathfrak{Re}(i \lambda_{k,\, 1}^\pm) (s-t)}}_{L^r(0,\, t)}
   = \eps^\frac{1}{2r} \left\{ \frac{1}{r \mathfrak{Re} (i \lambda_{k,\, 1}^\pm)} \left( 1- e^{-\frac{r}{\sqrt\eps} \mathfrak{Re}(i \lambda_{k,\, 1}^\pm) t} \right)  \right\}^{\frac{1}{r}} e^{\frac{1}{\sqrt\eps} \mathfrak{Re}(i \lambda_{k,\, 1}^\pm) t}.
  \end{align*}

Thus, we are now in a position to deal with the contribution of $c_{k,\, \eps}^{\pm}(t)$. We can infer from some direct calculations that
  \begin{align}
    \abs{c_{k,\, \eps}^{\pm}}
   = & \abs{ c^\eps(\mathbf{m}_{k,\, \eps,\, 2}^{\pm}) } + \eps^{-1} \abs{\skp{\phi^\eps}{\mathcal{R}_{k,\, \eps,\, 2}^{\pm}}}
  \\[2pt] \no
  \leq & c_1(t) + (\gamma-1) c_2(t) + c_3(t) + c_4(t),
  \end{align}
where
	\begin{align*}
	  c_1(t) = & \abs{ \skp{\mathbf{m}^\eps \otimes \mathbf{u}^\eps}{\nabla_x \mathbf{m}_{k,\, \eps,\, 2}^{\pm}} },
	\quad
	  c_2(t) = \abs{ \skp{\pi^\eps}{{\rm div} \mathbf{m}_{k,\, \eps,\, 2}^{\pm}} },
	\\[2pt]
	  c_3(t) = & \eps \abs{ \skp{\Psi^\eps \mathbf{u}^\eps}{{\rm div} \Sigma (\mathbf{m}_{k,\, \eps,\, 2}^{\pm})} },
	\text{ and }
	  c_4(t) = \eps^{-1} \abs{\skp{\phi^\eps}{\mathcal{R}_{k,\, \eps,\, 2}^{\pm}}}.
	\end{align*}

Estimations of the four terms follows the same process as that in \cite{DGLM99}, we here briefly give the results, as follows,
\begin{align}
	  c_1(t)
	\leq & \abss{ \mathbf{m}_{k,\, \eps,\, 2}^{\pm} }_{L^\infty(\Omega)}
			  \abss{ \mathbf{u}_1^\eps + \mathbf{u}_2^\eps}_{L^2(\Omega)}
				\abss{ \nabla_x \mathbf{u}^\eps }_{L^2(\Omega)}
	\\[2pt] \no
	  	& + \eps \abss{ \Psi^\eps }_{L^\infty(0,\, T; L^\kappa(\Omega))}
	  			 		 \abss{ \mathbf{u}^\eps }_{L^\frac{\kappa}{\kappa-1}(\Omega)}
	  			 		 \abss{ \nabla_x \mathbf{m}_{k,\, \eps,\, 2}^{\pm} }_{L^\infty(\Omega)}
	\\[2pt] \no
	\leq & C \abss{ \mathbf{u}_1^\eps }_{L^\infty(0,\, T; L^2(\Omega))}
					\abss{ \nabla_x \mathbf{u}^\eps }_{L^2(\Omega)}
			+ C \eps^\frac{1}{2} \abss{ \nabla_x \mathbf{u}^\eps }_{L^2(\Omega)}^2
			+ C \eps \abss{ \nabla_x \mathbf{u}^\eps }_{L^2(\Omega)},
	\\[2pt]
	  c_2(t)
	\leq & C \abss{ \pi^\eps }_{L^\infty(0,\, T; L^1(\Omega)) }
					\left\{ \abss{ \Psi_{k,\, \eps,\, 2}^{\pm} }_{L^\infty(\Omega)}
							 + \abss{ \mathcal{R}_{k,\, \eps,\, 2}^{\pm} }_{L^\infty(\Omega)} \right\},
	\\[2pt]
	  c_3(t)
	\leq & C \eps \abss{ D^2 \mathbf{m}_{k,\, \eps,\, 2}^{\pm} }_{L^\infty(\Omega)}
							 \abss{ \Psi^\eps }_{L^\infty(0,\, T; L^\kappa(\Omega))}
							 \abss{ \mathbf{u}^\eps }_{L^\frac{\kappa}{\kappa-1}(\Omega)}
	\leq C \eps \abss{ \nabla_x \mathbf{u}^\eps }_{L^2(\Omega)},
\end{align}
here we emphasize again that the boundary integral contribution for $\Psi^\eps \mathbf{u}^\eps$ vanishes due to the dual Navier-slip boundary conditions defined in section 2.

As for the term $c_4$, we use directly Proposition \ref{thm:boundary-layer} to get,
\begin{align}
  c_4(t) \leq&\eps^{-1} \abss{\Psi^\eps}_{L^\infty(0,\, T; L^\kappa(\Omega))}
  \abss{\widetilde\Psi_{k,\, \eps,\, 2}^{\pm}}_{L^\frac{\kappa}{\kappa-1}(\Omega)} +\eps^{-1}
  \abss{\mathbf{m}^\eps}_{L^\infty(0,\, T; L^\frac{2\gamma}{\gamma+1}(\Omega))}
  \abss{\widetilde{\mathbf{m}}_{k,\, \eps,\, 2}^{\pm}}_{L^\frac{2\gamma}{\gamma-1}(\Omega)}\no\\
  \leq & C\sqrt\eps^{1-\frac{1}{\kappa}}+C\sqrt\eps^{\frac{1}{2}-\frac{1}{2\gamma}}\no\\
  \leq &C\sqrt\eps^\alpha.
\end{align}
where we have used the notation $\mathcal{R}_{k,\, \eps,\, 2}^{\pm} = (\widetilde\Psi_{k,\, \eps,\, 2}^{\pm},\, \widetilde{\mathbf{m}}_{k,\, \eps,\, 2}^{\pm})^\top$ and $\alpha = \inf(1-\frac{1}{\kappa},\, \frac{1}{2} - \frac{1}{2\gamma})$ as before.

By applying $a(t)$ in \eqref{eq:Holder inequ for c} to $c_i$'s for $i\in \{1,\, 2,\, 3,\, 4\}$ yields the desired result of proposition \ref{lemm:convergence of b-eps},
\end{proof}
Thus it completes the proof of proposition \ref{prop:convergence of Q}. By noticing that in the case of $k \in \mathcal{J}$, ${\rm div} (\mathcal{Q}_2 \mathbf{m}^\eps \otimes \mathcal{Q}_2 \mathbf{m}^\eps)$ reduces to a gradient term, and consequently disappears in the pressure term, for details, see \cite{DGLM99}.

\appendix
\section{Some Lemmas}
In this section, we collected some results needed in the construction of the boundary layers.
\subsection{Geometry of the boundary ${\partial\Omega}$}
Next, we collect some differential geometry properties
related to the boundary ${\partial\Omega}$ which can be considered as a $(d-1)$
dimension compact Riemannian manifold with a metric induced from the
standard Euclidian metric of $\mathbb{R}^d$. Let $T({\partial\Omega})$ and $T({\partial\Omega})^\perp$
denote the tangent and normal bundles of ${\partial\Omega}$ in $\mathbb{R}^d$
respectively.

There is a tubular neighborhood $U_\delta = \{ x \in \Omega:
\mbox{dist}(x, {\partial\Omega}) < \delta\}$ of ${\partial\Omega}$ such that the nearest point
projection map is well defined and smooth. More precisely, we have the
following lemma:

\begin{lemma}\cite{JM15}\label{Projection}
If ${\partial\Omega}$ is a compact $C^k$ submanifold of dimension $d-1$ embedded
in $\mathbb{R}^d$, then there is $\delta = \delta_{\partial\Omega}>0$ and a map $\pi \in
C^{k-1}(U_\delta\,; {\partial\Omega})$
such that the following properties hold:

$(i)$ for all $x \in \Omega \subset \mathbb{R}^d$ with
$\mathrm{dist}(x\,,{\partial\Omega}) < \delta\,;$
\begin{equation*}
\pi(x) \in {\partial\Omega}\,, \quad x - \pi(x) \in T^\perp_{\pi(x)}({\partial\Omega})\,,\quad |
x - \pi(x)| = \mathrm{dist}(x\,, {\partial\Omega})\,, and
\end{equation*}
\begin{equation*}
|z - x| > \mathrm{dist}(x\,,{\partial\Omega})\quad \mbox{for any}\quad\! z \in
{\partial\Omega}\setminus\{\pi(x)\} \,;
\end{equation*}

$(ii)$
\begin{equation*}
\pi(x + z) \equiv x\,, \quad \mbox{for}\quad\! x \in {\partial\Omega}\,, z \in
T_x({\partial\Omega})^\perp\,, |z| < \delta\,,
\end{equation*}

$(iii)$ Let $\mathrm{Hess}\pi_x$ denote the Hessian of $\pi$ at
$x$, then
\begin{equation*}
\mathrm{Hess}\pi_x(V_1\,, V_2) = \mathrm{h}_x(V_1\,, V_2)\,,
\quad\mbox{for}\quad\! x\in{\partial\Omega}\quad\! V_1\,,V_2 \in T_x({\partial\Omega})\,,
\end{equation*}
where $\mathrm{h}_x$ is the second fundamental form of ${\partial\Omega}$ at $x$.
\end{lemma}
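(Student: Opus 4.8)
The plan is to realize $\pi$ as a component of the inverse of the endpoint map of the normal bundle of $\partial\Omega$; this is a purely differential-geometric fact, using nothing from the fluid equations or from assumption (H), only the compactness of $\partial\Omega$. First I would introduce the normal bundle $N\partial\Omega=\{(p,v):p\in\partial\Omega,\ v\in T_p^\perp(\partial\Omega)\}$, which, $\partial\Omega$ being a $C^k$ hypersurface, is a $C^{k-1}$ manifold of dimension $d$, and the $C^{k-1}$ endpoint map $E\colon N\partial\Omega\to\mathbb{R}^d$, $E(p,v)=p+v$. At a point $(p,0)$ of the zero section $dE_{(p,0)}$ is the identity of $\mathbb{R}^d=T_p(\partial\Omega)\oplus T_p^\perp(\partial\Omega)$, so by the inverse function theorem $E$ is a local diffeomorphism there; compactness of $\partial\Omega$ then promotes this to a global statement by the usual sequential argument (if $E(p_n,v_n)=E(q_n,w_n)$ with $(p_n,v_n)\neq(q_n,w_n)$ and $|v_n|,|w_n|\to0$, passing to subsequential limits gives a common limit point near which $E$ is not injective, a contradiction), producing $\delta>0$ such that $E$ restricts to a $C^{k-1}$ diffeomorphism from an open neighbourhood of the zero section onto the tube $U_\delta$. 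I then set $\pi:=\mathrm{pr}\circ E^{-1}$, where $\mathrm{pr}\colon N\partial\Omega\to\partial\Omega$ is the bundle projection, so $\pi\in C^{k-1}(U_\delta;\partial\Omega)$.

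Then I would read off (i) and (ii). For $x\in U_\delta$ write $(p,v)=E^{-1}(x)$, so $x=p+v$, $\pi(x)=p$, and $x-\pi(x)=v\in T_p^\perp(\partial\Omega)$ with $|v|$ small. Let $q\in\partial\Omega$ minimize $z\mapsto|x-z|$ over the compact set $\partial\Omega$; at the minimum $x-q\perp T_q(\partial\Omega)$ and $|x-q|=\mathrm{dist}(x,\partial\Omega)\le|v|$ is small, so $(q,x-q)$ lies in the domain where $E$ is injective and $E(q,x-q)=x=E(p,v)$ forces $q=p$, whence $|x-\pi(x)|=|v|=\mathrm{dist}(x,\partial\Omega)$; applying the same reasoning to any other minimizer yields the strict inequality $|z-x|>\mathrm{dist}(x,\partial\Omega)$ for $z\in\partial\Omega\setminus\{\pi(x)\}$. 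Property (ii) is immediate: for $x\in\partial\Omega$, $z\in T_x(\partial\Omega)^\perp$ with $|z|<\delta$, one has $x+z=E(x,z)$, hence $\pi(x+z)=\mathrm{pr}(x,z)=x$.

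For (iii) I would compute in coordinates adapted to a fixed $x_0\in\partial\Omega$: after a rigid motion, take $x_0=0$, $T_0(\partial\Omega)=\mathbb{R}^{d-1}\times\{0\}$, and write $\partial\Omega$ near $0$ as a graph $u\mapsto(u,f(u))$, $u\in\mathbb{R}^{d-1}$, with $f(0)=0$ and $\nabla f(0)=0$, so that the second fundamental form reads $\mathrm{h}_{x_0}(e_i,e_j)=(0,\dots,0,\partial^2_{ij}f(0))$. Writing $\pi(y)=(g(y),f(g(y)))$ where $g(y)\in\mathbb{R}^{d-1}$ minimizes $|y'-u|^2+|y_d-f(u)|^2$, characterized by $g(y)-y'+(f(g(y))-y_d)\nabla f(g(y))=0$, I differentiate this identity in $y$: once at $y=0$ it gives $\partial_{y_i}g(0)=e_i$ for $i\le d-1$; differentiating once more at $y=0$, every term that carries a factor $\nabla f(0)$ or $f(g(0))-0$ drops out, so $\partial^2_{ij}g(0)=0$ for $i,j\le d-1$. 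Substituting this into $\partial^2_{ij}\pi(0)=(\partial^2_{ij}g(0),\,(D^2f(0)e_j)\!\cdot\!e_i+\nabla f(0)\!\cdot\!\partial^2_{ij}g(0))=(0,\dots,0,\partial^2_{ij}f(0))$ identifies $\mathrm{Hess}\,\pi_{x_0}(e_i,e_j)$ with $\mathrm{h}_{x_0}(e_i,e_j)$, and bilinearity extends the identity to all tangent pairs $V_1,V_2$.

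I expect the only genuinely delicate points to be these two quantitative ones: extracting a single $\delta$ uniform over $\partial\Omega$ (handled by the compactness/sequential argument above), and tracking precisely which terms vanish when the implicit relation defining $g$ is differentiated twice at $x_0$ — this is exactly what kills the tangential part of $\mathrm{Hess}\,\pi_{x_0}$ and leaves the normal part equal to the second fundamental form. The remaining issues are routine, the only caveat being regularity: $\pi$ is only $C^{k-1}$, so the Hessian in (iii) is classical for $k\ge3$ and is read in the evident approximate sense otherwise, which suffices for the later boundary-layer construction.
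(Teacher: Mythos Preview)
Your argument is correct and is the standard tubular-neighbourhood proof. Note, however, that the paper does not actually prove this lemma: it is quoted from \cite{JM15} and the paper simply remarks that ``the proof of this lemma is classical, for example, see \cite{Simon}, where the lemma is proved for general submanifold.'' So there is no proof in the paper to compare against; your proposal supplies exactly the classical argument the paper defers to, via the endpoint map on the normal bundle, compactness for the uniform $\delta$, and a graph-coordinate computation for the Hessian identity.
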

The proof of this lemma is classical, for example, see
\cite{Simon}, where the lemma is proved for general submanifold.

The viscous boundary layer we will construct has significantly different behavior over the tangential and normal directions near the boundary. This inspire us to consider the following new coordinate system, which we call the curvilinear coordinate for the tubular neighborhood $U_\delta$ defined in Lemma \ref{Projection}. Because ${\partial\Omega}$ is a $(d-1)$ dimensional manifold, so locally
$\pi(x)$ can be represented as
\begin{equation}\label{Pai}
\pi(x) = (\pi^1(x)\,, \cdots\,, \pi^{d-1}(x))\,.
\end{equation}

More precisely, the representation \eqref{Pai} could be understood in the following sense: we can introduce a new coordinate system $(\xi^1\,,\cdots\,, \xi^d)$ by
a homeomorphism which is  locally defined as $\xi: \xi(x)=(\xi'(x)\,,\xi^d(x))$ where $\xi'= (\xi^1\,,\cdots\,, \xi^{d-1})$, such that $\xi(\pi(x))=(\xi',\, 0)$ and $\mathrm{d}(x)= \xi^d$, where $\mathrm{d}(x)$ is the distance function to
the boundary ${\partial\Omega}$, i.e.
\begin{equation}\label{function-d}
\mathrm{d}(x) = \mathrm{dist}(x\,,{\partial\Omega}) = |x - \pi(x)|\,.
\end{equation}
For the simplicity of notation, we denote $``\xi'(x)=\pi(x)"$ which is the meaning of \eqref{Pai}.

It is easy to see that $\grad \mathrm{d}$ is perpendicular to the
level surface of the distance function $\mathrm{d}$, i.e. the set
$S^z = \{ x\in \Omega: \mathrm{d}(x) = z\}$. In particular, on the
boundary, $\grad \mathrm{d}$ is perpendicular to $S_0 = {\partial\Omega}$.
Without loss of generality, we can normalize the distance
function so that $\grad \mathrm{d}(x) = -\mathrm{n}(x)$ when $x \in
{\partial\Omega}$. By the definition of the projection $\pi$, we have
\begin{equation*}\label{Proj-Tangent}
\pi(x + t \grad\mathrm{d}(x)) = \pi(x)\quad \mbox{for}\quad\!\!\! t\quad\!\!\!\mbox{small}\,,
\end{equation*}
and consequently, $\grad \pi^\alpha \cdot \grad \mathrm{d} = 0$, for
$\alpha = 1\,,\cdots\,, d-1$. In particular, for $t$ small enough,
$\grad \pi^\alpha(x) \in T_x({\partial\Omega})$ when $x\in {\partial\Omega}$.

In the proof of Proposition \ref{thm:boundary-layer}, one of the key idea is that the boundary layer terms have different length scales on the tangential and normal directions of each level set $S^z$, in particular the boundary ${\partial\Omega}=S^0$. So in order to solve the ansatz, we need to project the vector fields onto tangential and normal directions by inner product with $\grad \pi^\alpha$ and $\grad \mathrm{d}$.

Next, we calculate the induced Riemannian metric from $\mathbb{R}^d$ on the family of level set
$S^z$. In a local coordinate system, these Riemannian metric can be
represented as
\begin{equation}\nonumber
g = g_{\alpha \beta} \mathrm{d}\pi^\alpha \otimes \mathrm{d}
\pi^\beta\,,
\end{equation}
where $g_{\alpha \beta}= \< \tfrac{\p}{\p{\pi^\alpha}}\,,
\tfrac{\p}{\p{\pi^\beta}}\>$. Noticing that $\tfrac{\p}{\p{x^i}} =
\tfrac{\p\pi^\alpha}{\p{x^i}}\tfrac{\p}{\p{\pi^\alpha}}$, and
$\<\tfrac{\p}{\p{x^i}}, \tfrac{\p}{\p{x^j}} \> =\delta_{ij}$, the
metric $g_{\alpha \beta}$ can be determined by
\begin{equation}\nonumber
g_{\alpha
\beta}\tfrac{\p\pi^\alpha}{\p{x^i}}\tfrac{\p\pi^\beta}{\p{x^i}}
=1\,.
\end{equation}

\subsection{The operator $\A- i\lambda^\pm_{k,0}$}
First, for any $\phi, \tilde{\phi} \in L^2(\Omega, \mathbb{C}\times \mathbb{C}^d)$, we introduce a scalar product
\begin{equation} \label{innner-product}
   \<\phi | \tilde{\phi} \> = \int_\Omega \left(\Psi \tilde{\Psi} + \mathbf{m}\!\cdot\!\tilde{\mathbf{m}} \right) \,\mathrm{d}x\,,
\end{equation}
for any $\phi=(\Psi\,,\mathbf{m})^\top $ and $\tilde{\phi}=(\tilde{\Psi}\,,\tilde{\mathbf{m}})^\top $. Under this scalar product, the eigenvectors $\phi^\pm_{k,0}$ in \eqref{phi-int-0} have norm $1$.

Now, we consider the operator $\A- i \lambda^\tau_{k,0}$, where the acoustic mode $k \geq 1$ and $\tau= +$ or $-$. This operator, especially its pseudo inverse will be important in the construction of the boundary layer. The kernel and its orthogonal under the inner product \eqref{innner-product} are
\begin{equation}\nonumber
\mbox{Null}(\A- i \lambda^\tau_{k,0}) = \mbox{Span}\{ \phi^\tau_{l,0}: \lambda_{l,0}= \lambda_{k,0}\}\,,
\end{equation}
\begin{equation}\label{null-space}
\begin{aligned}
\mbox{Null}(\A- i \lambda^\tau_{k,0})^\perp = & \mbox{Span} \{\phi^+_{l,0}\,, \phi^-_{l,0}: \lambda_{l,0}\neq \lambda_{k,0}\}\oplus \mbox{Span} \{\phi^{-\tau}_{l,0}: \lambda_{l,0}= \lambda_{k,0}\}\\ &\oplus \mbox{Null}(\A)\,.
\end{aligned}
\end{equation}
Next, we define a bounded pseudo inverse of $\mathcal{A}- i
\lambda^\tau_{k,0}$
\begin{equation}\nonumber
\big(\mathcal{A}- i \lambda^\tau_{k,0}\big)^{-1}:
\mathrm{Null}\big(\mathcal{A}- i \lambda^\tau_{k,0}\big)^\perp
\longrightarrow \mathrm{Null}\big(\mathcal{A}- i
\lambda^\tau_{k,0}\big)^\perp\,,
\end{equation}
by
\begin{equation}\label{pseudo-1}
\big(\mathcal{A}- i \lambda^\tau_{k,0}\big)^{-1}
\phi^\delta_{l,0}=\tfrac{1}{i\lambda^\delta_{l,0}- i \lambda^\tau_{k,0}}\phi^\delta_{l,0}\,,\quad\!\mbox{for any}\quad\!
\phi^\delta_{l,0}\quad\!\mbox{with}\quad\! \lambda^\delta_{l,0} \neq \lambda^\tau_{k,0}\,,
\end{equation}
and
\begin{equation}\label{pseudo-3}
\big(\mathcal{A}- i \lambda^\tau_{k,0}\big)^{-1} (\Psi, \mathbf{m})^\top
= \tfrac{1}{i \lambda^\tau_{k,0}} (\Psi, \mathbf{m})^\top \,,
\end{equation}
for any $(\Psi, \mathbf{m})^\top \in \mbox{Null}(\mathcal{A})$ and
$\tau,\delta \in \{ +,-\}$. It is obvious that this pseudo-inverse operator
is a bounded operator.

The following lemma will be used frequently in this paper.
\begin{lemma}\cite{JM15}\label{Solve-A}
For each acoustic mode $k \geq 1$ and $\tau= +$ or $-$, let $\phi^\tau_{k,0}$ be defined in \eqref{phi-int-0}, and let $\nu^\tau_k$ be a given number and $ f^\tau_k$ and $g^\tau_k$ be given vectors. Then the following system for $\phi^\tau_k = (\Psi^\tau_k\,, \mathbf{m}^\tau_k)^\top $
\begin{equation}\label{A-Lambda}
\begin{aligned}
(\A- i \lambda^\tau_{k,0})\phi^\tau_k &= i \nu^\tau_k \phi^\tau_{k,0} + f^\tau_k\,,\\
\mathbf{m}^\tau_k\!\cdot\!\mathrm{n} & = g^\tau_k\quad \mbox{on}\quad \! {\partial\Omega}\,
\end{aligned}
\end{equation}
can be solved modulo $\mbox{Null}(\A- i \lambda^\tau_{k,0})$ under the following two conditions:
\begin{itemize}
\item $i \nu^\tau_k$ satisfies
\begin{equation}\label{Solve-mu}
i \nu^\tau_k = \int_{\partial\Omega} g^\tau_k \Psi_{k,0}\mathrm{d}\sigma_{\!x} -  \langle f^\tau_k| \phi^\tau_{k,0} \rangle\,.
\end{equation}
\item If $\lambda_{k,0}$ is an eigenvalue with multiplicity greater than $1$,
 a compatibility condition is needed
\begin{equation}\label{compatability}
\int_{\partial\Omega} g^\tau_k \Psi_{l,0}\mathrm{d}\sigma_{\!x} = \langle f^\tau_k| \phi^\tau_{l,0} \rangle\,, \quad \mbox{for}\quad\! \lambda_{l,0}=\lambda_{k,0}\quad\mbox{with}\quad\! k \neq l\,.
\end{equation}
\end{itemize}
Under these two conditions, the solutions to \eqref{A-Lambda} can be uniquely represented as
\begin{equation}\label{VK}
\phi^\tau_k  = P_0 \phi^\tau_k + P_0^\perp \phi^\tau_k =\sum\limits_{\lambda_{k,0}
= \lambda_{l,0}} \< \phi^\tau_k \,|\, \phi^\tau_{l,0}\> \phi^\tau_{l,0} + P_0^\perp \phi^\tau_k \,,
\end{equation}
where $P_0^\perp \phi^\tau_k  \in \mbox{Null}\big(\mathcal{A}- i
\lambda^\tau_{k,0}\big)^\perp$ is completely determined, and $P_0 \phi^\tau_k$ is the orthogonal projection on $ \mbox{Null}\big(\mathcal{A}- i
\lambda^\tau_{k,0}\big)$ which is not determined.
\end{lemma}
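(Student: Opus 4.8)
The plan is to eliminate the momentum unknown $\mathbf m^\tau_k$, reducing \eqref{A-Lambda} to a single Helmholtz-type equation with a Neumann condition, and then to read off the solvability conditions \eqref{Solve-mu}--\eqref{compatability} from the Fredholm alternative for the shifted Neumann Laplacian. First I would use that $k\geq 1$ indexes a genuine acoustic mode, so $\lambda_{k,0}>0$ and in particular $i\lambda^\tau_{k,0}\neq 0$. Writing the right-hand side of \eqref{A-Lambda} as $i\nu^\tau_k\phi^\tau_{k,0}+f^\tau_k=(r,\mathbf r)^\top$ with $r$ scalar and $\mathbf r$ a vector field, the momentum component of \eqref{A-Lambda} reads $\grad\Psi^\tau_k-i\lambda^\tau_{k,0}\mathbf m^\tau_k=\mathbf r$, hence $\mathbf m^\tau_k=(i\lambda^\tau_{k,0})^{-1}(\grad\Psi^\tau_k-\mathbf r)$. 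Substituting this into the mass component $\mathrm{div}\,\mathbf m^\tau_k-i\lambda^\tau_{k,0}\Psi^\tau_k=r$ turns \eqref{A-Lambda} into $-\Delta_x\Psi^\tau_k-\lambda_{k,0}^2\Psi^\tau_k=G$ in $\Omega$, where $G$ is an explicit linear combination of $r$ and $\mathrm{div}\,\mathbf r$, while the boundary condition $\mathbf m^\tau_k\cdot\mathrm n=g^\tau_k$ becomes the Neumann condition $\tfrac{\partial\Psi^\tau_k}{\partial\mathrm n}=h$ on ${\partial\Omega}$ with $h$ built linearly from $g^\tau_k$ and $\mathbf r\cdot\mathrm n$. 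Conversely, any $H^2$ solution of this Neumann problem produces, via the formula for $\mathbf m^\tau_k$, a solution of \eqref{A-Lambda} (here one uses $\mathbf r\in H^1$, which holds because in the applications the data are smooth); so the two problems are equivalent.

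Next I would invoke standard elliptic theory for $-\Delta_x-\lambda_{k,0}^2$ with the Neumann boundary condition on the $C^2$ domain $\Omega$. Since $\lambda_{k,0}^2$ is exactly a Neumann eigenvalue of $-\Delta_x$, with eigenspace $\mathrm H_0(\lambda_{k,0})=\mathrm{Span}\{\Psi_{l,0}:\lambda_{l,0}=\lambda_{k,0}\}$, the Fredholm alternative yields an $H^2$ solution $\Psi^\tau_k$ — unique modulo $\mathrm H_0(\lambda_{k,0})$ — precisely when the data is $L^2$-orthogonal to $\mathrm H_0(\lambda_{k,0})$, i.e. when $\int_\Omega G\,\Psi_{l,0}\,\mathrm dx+\int_{\partial\Omega}h\,\Psi_{l,0}\,\mathrm d\sigma_x=0$ for every $l$ with $\lambda_{l,0}=\lambda_{k,0}$; this is Green's second identity together with $\tfrac{\partial\Psi_{l,0}}{\partial\mathrm n}=0$ from \eqref{Laplace}. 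To bring these conditions into the desired shape, I would integrate $\int_\Omega(\mathrm{div}\,\mathbf r)\Psi_{l,0}\,\mathrm dx$ by parts, which cancels the boundary term $\int_{\partial\Omega}(\mathbf r\cdot\mathrm n)\Psi_{l,0}$; dividing through by $i\lambda^\tau_{k,0}=i\lambda^\tau_{l,0}$ and using that $\grad\Psi_{l,0}$ is proportional to $i\lambda^\tau_{l,0}\mathbf m^\tau_{l,0}$ and $\Psi_{l,0}$ to $\Psi^\tau_{l,0}$ by \eqref{phi-int-0}, the remaining interior integrals reassemble into $\langle i\nu^\tau_k\phi^\tau_{k,0}+f^\tau_k\,|\,\phi^\tau_{l,0}\rangle$. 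Since $\langle\phi^\tau_{l,0}\,|\,\phi^\tau_{k,0}\rangle=\delta_{lk}$ for $\lambda_{l,0}=\lambda_{k,0}$, the case $l=k$ is exactly the identity \eqref{Solve-mu} determining $i\nu^\tau_k$, and the cases $l\neq k$ are precisely the compatibility conditions \eqref{compatability}.

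Finally, under these conditions I would take the representative $\Psi^\tau_k$ of the Neumann problem that is orthogonal to $\mathrm H_0(\lambda_{k,0})$, reconstruct $\mathbf m^\tau_k$, and observe that replacing $\Psi^\tau_k$ by $\Psi^\tau_k+c\,\Psi_{l,0}$ with $\Psi_{l,0}\in\mathrm H_0(\lambda_{k,0})$ adds exactly a scalar multiple of $\phi^\tau_{l,0}$ to $\phi^\tau_k$; hence the solution of \eqref{A-Lambda} is determined modulo $\mathrm{Null}(\mathcal{A}-i\lambda^\tau_{k,0})$, which is the decomposition \eqref{VK} with $P_0^\perp\phi^\tau_k$ uniquely fixed and $P_0\phi^\tau_k$ free. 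The step I expect to be the main obstacle is the careful bookkeeping across the two integrations by parts — the one producing the Helmholtz equation and the one in Green's identity — so that the boundary contributions in $g^\tau_k$ and $\mathbf r\cdot\mathrm n$ and the interior ones recombine with the right signs and normalization into the pairings $\langle\cdot\,|\,\phi^\tau_{l,0}\rangle$; one must also verify that the finite-dimensional ambiguity $\mathrm H_0(\lambda_{k,0})$ of the scalar problem matches the kernel $\mathrm{Null}(\mathcal{A}-i\lambda^\tau_{k,0})$ exactly — no larger and no smaller — which uses $\lambda_{k,0}>0$ so that the ``$-\tau$'' modes and $\mathrm{Null}(\mathcal{A})$ land in the orthogonal complement, cf. \eqref{null-space}.
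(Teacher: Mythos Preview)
Your approach is correct. Note, however, that the paper does not itself prove this lemma: it is quoted from \cite{JM15} and stated without proof in the appendix. What the paper does supply is the spectral setup immediately preceding the lemma --- the description of $\mathrm{Null}(\A-i\lambda^\tau_{k,0})$, its orthogonal complement \eqref{null-space}, and the explicit pseudo-inverse \eqref{pseudo-1}--\eqref{pseudo-3} --- which points toward a somewhat different route: lift the inhomogeneous boundary datum $g^\tau_k$ by some fixed $\tilde\phi$ with $\tilde{\mathbf m}\cdot\mathrm n=g^\tau_k$, reduce to a homogeneous-boundary problem for $\phi^\tau_k-\tilde\phi$, expand in the eigenbasis $\{\phi^\pm_{l,0}\}\cup\mathrm{Null}(\A)$, and read off \eqref{Solve-mu}--\eqref{compatability} as the vanishing of the components along $\mathrm{Null}(\A-i\lambda^\tau_{k,0})$, while the pseudo-inverse furnishes $P_0^\perp\phi^\tau_k$ mode by mode.

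Your reduction to the scalar Helmholtz--Neumann problem is arguably more direct: it avoids having to lift the boundary datum in the vector-valued problem and places everything under standard elliptic Fredholm theory, with the boundary integral $\int_{\partial\Omega}g^\tau_k\Psi_{l,0}\,\mathrm d\sigma_x$ appearing naturally from Green's identity. The spectral/pseudo-inverse route, by contrast, makes the structure of $P_0^\perp\phi^\tau_k$ more transparent coefficient by coefficient, which is convenient later in Section~3 when one tracks how $\phi^\Int_{k,j}$ depends linearly on lower-order data (the $Z^\Int_j(\cdot)$ notation). Both arguments yield the same conditions; the bookkeeping you flag --- matching signs across the two integrations by parts and checking that the scalar kernel $\mathrm H_0(\lambda_{k,0})$ corresponds exactly to $\mathrm{Null}(\A-i\lambda^\tau_{k,0})$ via $\Psi_{l,0}\mapsto\sqrt{2}\,\phi^\tau_{l,0}$ --- is routine.
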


\bigskip

\end{document}